\definecolor{taucol}{gray}{0.5}
\newcommand{\RR}{\ensuremath{\mathbb{R}}}
\newcommand{\NN}{\ensuremath{\mathbb{N}}}
\newcommand{\PP}{\ensuremath{\mathbb{P}}}
\newcommand{\EE}{\ensuremath{\mathbb{E}}}
\newcommand{\D}{\ensuremath{\text{d}}}
\newcommand{\Half}{\ensuremath{{\mathcal H}}}
\newcommand{\DepSet}{\ensuremath{{\mathcal K}}}
\newcommand{\PotentialDepSet}{\ensuremath{{\mathcal L}}}
\newcommand{\Bin}{\ensuremath{\text{Bin}}}
\newcommand{\pmid}{\ensuremath{\,:\,}}
\newcommand{\finite}{\ensuremath{\mathcal{F}}}
\newcommand{\eins}{\ensuremath{\mathbf{1}}}
\newcommand{\Eins}{\ensuremath{\mathbbm{1}}}
\DeclareMathOperator{\id}{id}
\DeclareMathOperator{\Cov}{Cov}
\newcommand{\distr}{\ensuremath{\mathcal{L}}}
\newcommand{\punkt}{\phantom{x}}
\newcommand{\cl}{\ensuremath{\text{\normalfont closure}}}
\newcommand{\maxecf}{\ensuremath{\Theta_{\text{\normalfont max}}}}
\newcommand{\binecf}{\ensuremath{\Theta_{\text{\normalfont bin}}}}
\newcommand{\limecf}{\ensuremath{\Theta_{\text{\normalfont lim}}}}
\newcommand{\maxcorr}{\ensuremath{\Xi_{\text{\normalfont max}}}}
\newcommand{\bincorr}{\ensuremath{\Xi_{\text{\normalfont bin}}}}
\newcommand{\limcorr}{\ensuremath{\Xi_{\text{\normalfont lim}}}}
\newtheorem{theorem}{Theorem}
\newtheorem{proposition}[theorem]{Proposition}
\newtheorem{corollary}[theorem]{Corollary}
\theoremstyle{definition}
\newtheorem{definition}[theorem]{Definition}
\theoremstyle{remark}
\newtheorem{remark}[theorem]{Remark}
\begin{document}

\title{Characterizing extremal coefficient functions\\ and extremal correlation functions} 

\author{
  Kirstin Strokorb\footnote{Institute for Mathematical Stochastics, Georg-August-University  G\"ottingen, Goldschmidtstr. 7, 37077 G\"ottingen, Email: strokorb@math.uni-goettingen.de},
  Martin Schlather\footnote{Institute of Mathematics, University of Mannheim, 68131 Mannheim, Email: schlather@math.uni-mannheim.de}
}

\maketitle 

\begin{abstract}
We focus on two dependency quantities of a max-stable random field $X$ on some space $T$: the extremal coefficient function $\theta$ which we define on finite sets of $T$ and the extremal correlation function $\chi(s,t)=\lim_{x \uparrow \infty} \PP(X_s \geq x \mid X_t \geq x)$.  We fully characterize extremal coefficient functions $\theta$ by a property called complete alternation and construct a corresponding max-stable random field. Simple properties and consequences concerning the convex geometry of extremal coefficients are derived. We study how the continuity of $X$, $\theta$ and $\chi$  are linked to each other, and we show that extremal correlation functions $\chi$ allow for convex combinations in general, and for products and pointwise limits if the resulting function is continuous. These are operations which are well-known for positive definite functions, but the latter are non-trivial for extremal correlation functions. Finally, we regard some additional implications, when the random field $X$ on $T=\RR^d$ is stationary.
\end{abstract}

\noindent \textit{Keywords}: {dependence measures; dependency set; extremal coefficients; extreme value theory; max-linear model; max-stable random fields; spatial distributions;  tail dependence}\\ 
\noindent \textit{2010 MSC}: {Primary 60G70;} \\ 
\phantom{\textit{2010 MSC}:} {Secondary 60G60,60E15}



\section{Introduction} 

Max-stable random fields have gained increasing attention in  many areas of application such as hydrology or meteorology --  see e.g. \cite{blanchetdavison_11,buishandetalii_08,naveauetalii_09} for some recent examples. The structure theory and classification of max-stable processes is well-studied, exemplarily we refer to \cite{dehaan_84,ginehahnvatan_90,wangstoev_10}. However, appropriate modelling of the spatial dependence structure is still a difficult question.
In contrast to Gaussian random fields, the dependence structure of a finite sample of a max-stable random field cannot be described parametrically. Instead, 
the notion of {\em spectral measure}, {\em stable tail dependence function} or {\em dependency set} (cf. \cite{beirlantetalii_03,dehaan_84,molchanov_08,resnick_08}, for example) provide three equivalent concepts each of which fully describes the dependence structure of a multivariate max-stable distribution with fixed marginals. Further, {\em Pickands dependence function} \cite{pickands_81} comprises the complete dependence structure of a bivariate max-stable distribution.

However, all of these dependency descriptors may have a  very complex structure which makes them hard to be estimated from data (cf. \cite{dreeshuang_98,einmahlsegers_09,einmahlkrajinasegers_08} and references therein for some approaches on the issue in a multivariate setting). 

Therefore, several attempts have been made to propose meaningful, but simpler dependence measures:
In this paper, we focus on {\em extremal coefficients} \cite{smith_90} in a spatial setting and a function which we call {\em extremal correlation function} and was suggested in \cite{schlathertawn_03} as an analogue to the correlation function for extreme values. The latter is also a special case of the so-called {\em extremogram} \cite{davismikosch_09} and sometimes referred to as {\em (upper) tail dependence coefficient (function)} \cite{beirlantetalii_03,davismikosch_09,falk_05}, {\em $\chi$-measure} \cite{beirlantetalii_03,colesheffernantawn_99} or {\em extremal coefficient function} \cite{fasenetalii_10}. 
Besides being a bivariate dependence measure, the extremal correlation function also contains mixing information of the process \cite{kabluchkoschlather_10}.
An estimator which stems from a geostatistical approach is proposed in \cite{naveauetalii_09}.

The purpose of this paper is to characterize these spatially defined quantities --  extremal coefficients and extremal correlation function -- through their properties, to give necessary and sufficient criteria and to establish operations on the class of such quantities.

The text is organized as follows: In section \ref{foundations} we give the definitions and our  notation of the basic objects that we study, i.e. max-stable random fields and related quantities such as extremal coefficient function and extremal correlation function are introduced.  In section \ref{harmonic} we characterize extremal coefficient functions by a concept called complete alternation and construct a corresponding max-stable random field, thus generalizing results of \cite{schlathertawn_02} to a spatial setting. After deriving some immediate consequences from this,  section \ref{convexgeometry} is an intermezzo towards the convex geometry related to the results of section \ref{harmonic}.
In section \ref{continuity} we analyse how the continuity of extremal coefficient function, extremal correlation function and corresponding max-stable random fields are mutually linked. Section \ref{inclusions} classifies the convex sets of extremal coefficient functions and extremal correlation functions within other related convex sets of functions. We shall see that some of these convex sets of positive definite functions are additionally closed under pointwise limits and products. Finally, we present some additional conclusions which apply to stationary max-stable random fields on $\RR^d$ in section \ref{stationary}.

\section{Foundations and Definitions}\label{foundations}

We consider max-stable random fiels $X=\{X_t\}_{t \in T}$ on an arbitrary location space $T$ with the same one-dimensional non-degenerate marginals. For convenience we use standard Fr{\'e}chet marginals, i.e. $\PP(X_t \leq x)=e^{-1/x}$ for $t \in T$ and $x \geq 0$. Then, max-stability of the random field $X$ simply means that for any $n$ independent copies $X^{(1)},\dots,X^{(n)}$ of $X$ we have $\distr(n^{-1} \max_{i=1,\dots,n}(X^{(i)})) \sim \distr(X)$, i.e. these fields are equally distributed.

We will frequently fix a finite subset $M \subset T$ and use the following conventions: Elements of $\RR^M$, the vector space of real-valued  functions on $M$, are denoted by $x=(x_t)_{t \in M}$ where $x_t=x(t)$. By $\eins^M_L \in \RR^M$, we denote the indicator function of $L$ where $\emptyset \neq L \subset M$. Thus, the elements $\{\eins^M_{\{t\}}\}_{t \in M} \in \RR^M$ form a standard basis of $\RR^M$.
The standard scalar product on this space is $\langle x,y \rangle = \sum_{t \in M} x_t \cdot y_t$.  Analogous notation is made for the space $[0,\infty]^M$, the space of $[0,\infty]$-valued functions on $M$. In this context, we define the expressions $1/\infty:=0$ and $1/0:=\infty$. Further, we consider some reference norm $\lVert \cdot \rVert$ on $\RR^M$ (not necessarily the one from the scalar product) and denote $S_M:=\{ a \in [0,\infty]^M \pmid \lVert a \rVert = 1 \}$.

The full dependence structure of each multivariate distribution of $\{X_t\}_{t \in M}$ for some finite subset $M \subset T$  can be described by using one of the following equivalent approaches:

\begin{itemize}
\item {\em spectral measure} $H_M$ (cf. \cite{dehaan_84,resnick_08}, for example), i.e. the measure on  $S_M$ such that for $x \in [0,\infty]^M$
\begin{align}\label{spectralmeasure}
- \log \PP(X_{t} \leq x_t, \, t \in M) = \int_{S_M} \max_{t \in M}\left(\frac{a_t}{x_t}\right) H_M (\D a)
\end{align}
\item {\em stable tail dependence function} $\ell_M$ (cf. \cite{beirlantetalii_03}, for example), i.e. the function on  $[0,\infty]^M$ defined through
\begin{align*}
\ell_M(x) := - \log \PP(X_{t} \leq 1/x_t, \, t \in M) = \int_{S_M} \max_{t \in M} \left(a_t \cdot x_t\right) H_M (\D a)
\end{align*}
\item {\em dependency set} $\DepSet_M$ (cf. \cite{molchanov_08}), i.e. the unique compact convex set $\DepSet_M \subset [0,\infty)^M$ satisfying 
\begin{align}\label{depSet}
  \ell_M(x)=\sup\{ \langle x,y \rangle \pmid y \in \DepSet_M\},
\end{align}
where it suffices to check this condition for  $0\neq x \in [0,\infty)^M$. 
\end{itemize}

Of course, if we consider such quantities for different subsets $M \subset T$, they have to be consistent among each other to define a random field $X=\{X_t\}_{t \in T}$ on $T$. For instance, due to our choice of standard Fr{\'e}chet marginals we have the normalization $\ell_M(\eins^M_{\{t\}})=\int_{S_M}a_t H_M (\D a)=1$ for $t \in M$.

For any non-empty finite set of locations $M \subset T$ its {\em extremal coefficient} is defined using some $x \in (0,\infty)$ by 
\begin{align}\label{theta_intro}
  \theta(M):= \frac{- \log \PP(\max_{t \in M} X_t \leq x)}{- \log \PP(X_t \leq x)} = \int_{S_M} \max_{t \in M}\left({a_t}\right) H_M (\D a) = \ell_M(\eins^M_M).
\end{align}
Indeed, $\theta(M)$ does not depend on $x \in (0,\infty)$ and takes values in the interval $[1,|M|]$. Roughly speaking, the extremal coefficient $\theta(M)$ grasps the effective number of independent variables among the random variables $\{X_t\}_{t \in M}$. It is coherent to set $\theta(\emptyset):=0$.
Consistency of the finite dimensional dependency measures requires also that 
$\theta(L)=\ell_L(\eins^L_L)=\ell_M(\eins^M_L)$ for $\emptyset \neq L \subset M$, for example.
We will consider $\theta$ in this paper as a function on the {\em set of finite subsets of $T$}, which we denote by $\finite(T)$. The function 
\begin{align*}\theta: \finite(T) \rightarrow [0,\infty)\end{align*} 
is then called {\em extremal coefficient function}. It is an application of 
l'H{\^o}pitals rule that $\theta(M)$ coincides with
\begin{align}\label{theta_lHopital}
\theta(M)&= \lim_{x \uparrow  \infty}
    \frac{1-\PP\left(\max_{t \in M} X_t \leq x \right)}{1-\PP\left(X_t \leq x \right)}
    = \lim_{x \uparrow  \infty}
    \frac{\PP\left(\exists \, t \in M  \text{ such that } X_t \geq x \right)}{\PP\left(X_t \geq x \right)}.
\end{align}
 That is why the extremal correlation function $\theta$ is intimately connected with a bivariate characteristic of the max-stable random field  $X$ which we call {\em extremal correlation function} $\chi$ given by the following limit
\begin{align} \label{chi_intro}
\chi: T \times T \rightarrow [0,1] \qquad
  \chi(s,t):= \lim_{x \uparrow \infty} \PP(X_s \geq x \mid X_t \geq x) 
\end{align}
This quantity dates back to \cite{sibuya_60} and has entered the literature under different names, most prominently (upper) tail dependence coefficient (function) \cite{beirlantetalii_03,davismikosch_09,falk_05} or $\chi$-measure \cite{beirlantetalii_03,colesheffernantawn_99}. We prefer the term extremal correlation function, since $\chi$ is a symmetric positive definite kernel on $T \times T$ as has been observed already in \cite{davismikosch_09,schlathertawn_03}, for example. The relation
\begin{align}\label{sumistwo}
\theta(\{s,t\}) + \chi(s,t) = 2 \qquad \forall\, s,t \in T
\end{align}
holds because of (\ref{theta_lHopital}) and is widely known \cite{beirlantetalii_03,colesheffernantawn_99,fasenetalii_10}.
For technical reasons we introduce
\begin{align} \label{eta_intro}
\eta: T \times T \rightarrow [0,1] \qquad
\eta(s,t) := 1-\chi(s,t) = \theta(\{s,t\})-1.
\end{align}
In terms of the spectral measure, we can express these bivariate functions as
\begin{align*}
\chi(s,t) = \int_{S_{M}} \min \left({a_s,a_t}\right) H_{M} (\D a)  \qquad  \eta(s,t) =  \frac{1}{2} \int_{S_{M}}  \lvert {a_s-a_t} \rvert \, H_{M} (\D a).
\end{align*}
for any finite $M\subset T$ with $s,t \in M$.

\section{Harmonic analysis of extremal coefficients}\label{harmonic}

\subsection{A consistent max-linear model for max-stable random fields}

Suppose we are given a collection of real numbers $(\tau^M_L)$ where $M \subset T$ ranges over all non-empty finite subsets of $T$ and $L$ over all non-empty subsets of $M$. We consider random fields $X^*=\{X^*_t\}_{t \in T}$ on $T$ with marginal distributions of the following form: 
\begin{align}\label{model}
- \log \PP(X^*_t \leq x_t, \, t \in M) = \sum_{\emptyset \neq L \subset M} \tau_{L}^{M} \max_{t \in L}\frac{1}{x_t}
&& \qquad \forall \, M \in \finite(T)\setminus\{\emptyset\}, \, x \in [0,\infty]^M
\end{align}
Remember that $\finite(T)$ denotes the set of finite subsets of $T$.
Such a model has been studied already in \cite{schlathertawn_02} for finite sets $T$.
We draw our attention now to arbitrary index sets $T$ and list the conditions on these numbers $(\tau^M_L)$ which guarantee that they define the distribution of a max-stable random field $X^*$ with standard Fr{\'e}chet marginals:

First of all observe that once these numbers $(\tau^M_L)$ define a valid random field via model (\ref{model}), max-stability  is guaranteed immediately. As to the other conditions: 
\begin{itemize}
\item   Formula (\ref{model}) defines a valid  distribution function  for all non-empty finite $M \subset T$ if
\begin{align}
\label{tau1}
\tau^M_L &\geq 0 
&& \qquad \forall \, M \in \finite(T)\setminus\{\emptyset\}, \, \emptyset \neq L \subset M
\end{align} 
In case these inequalities are satisfied, the distribution of $\{X^*_t\}_{t \in M}$ coincides with the distribution of the random vector $\{X_t\}_{t \in M}$ which is given as a max-linear model built from $2^{|M|-1}$ i.i.d. standard Fr{\'e}chet variables $\{ V_L \}_{\emptyset \neq L \subset M}$ by $X_t = \max_{t \in L \subset M} \tau_{L}^{M} V_L $ (cf. \cite{schlathertawn_02}).
\item Subsequently, Kolmogorov consistency is ensured if and only if
\begin{align}
\label{tau2}
\tau^M_L=\tau^{M \cup \{t\}}_L+\tau^{M \cup \{t\}}_{L \cup \{t\}}
&& \qquad \forall \, M \in \finite(T)\setminus\{\emptyset\}, \, \emptyset \neq L \subset M,\, t \in T, \, t \not\in M,
\end{align}
which is equivalent to 
\begin{align}
\label{tau4}
\tau^A_K = \sum_{ J \subset M \setminus A} \tau^M_{K \cup J} 
&& \qquad \forall \, M \in \finite(T)\setminus\{\emptyset\}, \emptyset \neq K \subset A \subset M.
\end{align}
\item The model has standard Fr{\'echet} marginals if and only if
\begin{align}
\label{tau3}
\tau^{\{t\}}_{\{t\}}&=1
&& \qquad \forall \, t \in T. 
\end{align}
\end{itemize}

To illustrate the model (\ref{model}) better, we take a look at its spectral measure: Since it is  a max-linear model, the spectral measure $H^*_M$ of $\{X^*_{t}\}_{t \in M}$ is discrete and can be written as
\begin{align*}
H^*_M  = \sum_{\emptyset \neq L \subset M} \tau^M_L \lVert \eins^M_L \rVert \, \delta_{\frac{\eins^M_L}{\lVert \eins^M_L \rVert}}
\end{align*}
with respect to the reference norm $\lVert \cdot \rVert$ on $\RR^M$ (cf. (\ref{spectralmeasure})), see also figure \ref{spectralmeasurefigure}.

\begin{figure}[htp]
\centerline{
\xymatrix@=25pt@*=0{&& \ar@{<-}^(.2){\eins^{123}_3}[dd] \\ \\
&& { \bullet}  \ar@{-}^(-.2){\tau^{123}_{3}}[rr]\ar@{--}[dd] 
& & { \bullet}  \ar@{-}[dd] \ar@{}_(-.25){\tau^{123}_{23}}[ll]
\\
&{ \bullet} \ar@{-}[ur]\ar@{-}^(-.2){\tau^{123}_{13}}[rr]\ar@{-}[dd]
& & { \bullet} \ar@{-}[ur]\ar@{-}[dd] \ar@{}_(.1){\tau^{123}_{123}}[ur]
\\
&& {\circ} \ar@{--}[rr]
& & { \bullet} \ar@{}_(-.25){\tau^{123}_{2}}[ll] && \ar@{<-}^(.2){\eins^{123}_2}[ll] 
\\
&{ \bullet} \ar@{-}^(-.2){\tau^{123}_{1}}[rr]\ar@{--}[ur]
& & { \bullet} \ar@{-}[ur]  \ar@{}_(.1){\tau^{123}_{123}}[ur]\\ 
\ar@{<-}_(.2){\eins^{123}_1}[ur]
}
}
\caption{
Spectral measure representation of $\{X^*_{t}\}_{t \in M}$ for $M = \{1,2,3\}$ if we choose the reference norm on $\RR^M$ to be the maximum norm. In this case the spectral measure  simplifies to a sum of weighted pointmasses on the vertices of a cube: $H^*_M  = \sum_{\emptyset \neq L \subset M} \tau^M_L \, \delta_{\eins^M_L}$. 
}\label{spectralmeasurefigure}
\end{figure}
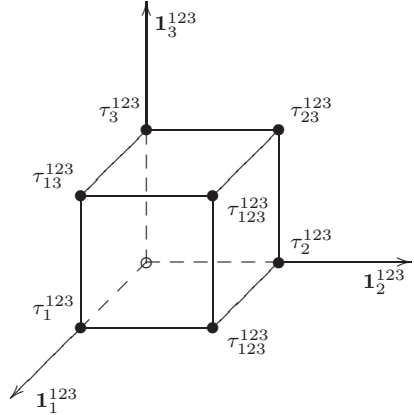

\subsection{Complete alternation of extremal coefficients}

In \cite{molchanov_08} and \cite{schlathertawn_02}  the class of possible extremal coefficients has been fully characterized in a multivariate setting. This has been done by a set of inequalities which  means the complete alternation of those coefficients and immediately transfers to the spatial setting as we shall see next. Behind the following characterizations, we use the fact that $\finite(T)$, the set of finite subsets of $T$, forms a semigroup with respect to the union operation $\cup$ and with neutral element the empty set $\emptyset$.
The following notation is mainly adopted from \cite{molchanov_05} and  \cite{bcr_84}.   For a function $f:\finite(T) \rightarrow \RR$ and elements $K,L \in \finite(T)$ we set:
\begin{align*}
\left(\Delta_{K}f\right)(L):= f(L)-f(L\cup K)
\end{align*}

\begin{definition} (complete alternation / negative definiteness)\\
A function $\psi: \finite(T) \rightarrow \RR$ is called {\em completely alternating} on $\finite(T)$
if for all $n \geq 1$, $\{K_1,\dots, K_n\} \subset \finite(T)$ and $K \in \finite(T)$
\[ 
\left(\Delta_{K_1}\Delta_{K_2} \dots \Delta_{K_n} \psi \right)(K) = 
\sum_{I \subset \{1, \dots, n\}} (-1)^{|I|} \,\psi\left(K \cup \bigcup_{i \in I} K_i\right)
 \leq 0.
\]
A function $\psi: \finite(T) \rightarrow \RR$ is called {\em negative definite (in the semigroup sense)} on $\finite(T)$
if for all $n \geq 2$, $\{K_1,\dots, K_n\} \subset \finite(T)$ and $\{a_1,\dots,a_n\} \subset \RR$ with $\sum_{j=1}^n a_j=1$
\[ 
\sum_{j=1}^n \sum_{k=1}^n a_j a_k \psi(K_j \cup K_k) \leq 0.
\]
Because the semigroup $(\finite(T),\cup,\emptyset)$ is idempotent, these two terms coincide, i.e. $\psi: \finite(T) \rightarrow \RR$ is {\em completely alternating} if and only if $\psi$ is {\em negative definite (in the semigroup sense)} (cf. \cite{bcr_84}, 4.1.16).
\end{definition}

An important example of a completely alternating function on $\finite(T)$ is the {\em capacity functional} $C:\finite(T) \rightarrow \RR$ of a binary random field $Y$. It is given by 
\begin{align}\label{cap}
C(M) = \PP(\exists \, t \in M \text{ such that } Y_t = 1) = \PP\left(\max_{t \in M} Y_t = 1\right).
\end{align}
The following theorem characterizes such capacity functionals:

\begin{theorem} [\cite{molchanov_05}, p. 52, for example]   \label{cap_CA} 
  The function $C: \finite(T) \rightarrow \RR$ is the capacity of a binary $\{0,1\}$-field on $T$ with the same one-dimensional marginals if and only if the following four conditions are satisfied:
  \begin{enumerate}[(i)]
  \item $C$ takes values in the interval $[0,1]$.
  \item $C(\emptyset) = 0$
  \item $C(\{t\}) =: p \leq 1$ is independent of $t \in T$.
  \item $C$ is completely alternating.
  \end{enumerate}
\end{theorem}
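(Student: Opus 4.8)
The plan is to prove the two implications separately, using throughout the \emph{avoidance functional} $Q:=1-C$ on $\finite(T)$, which for a binary field $Y$ records $Q(M)=\PP(Y_t=0\text{ for all }t\in M)$ and satisfies $Q(\emptyset)=1$. The whole argument will rest on the multiplicative family of $\{0,1\}$-valued random variables $g_M:=\prod_{t\in M}(1-Y_t)$ (with $g_\emptyset:=1$): idempotency $(1-Y_t)^2=1-Y_t$ gives $g_M\,g_K=g_{M\cup K}$, and by construction $C(M)=1-\EE[g_M]$.

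\textbf{Necessity.} Properties (i)--(iii) are immediate, since $C$ is a probability, $C(\emptyset)=1-\EE[g_\emptyset]=0$, and $C(\{t\})=\PP(Y_t=1)=p$ does not depend on $t$ by the marginal assumption. For (iv) I would prove by induction on $n$ the closed formula
\begin{align*}
  (\Delta_{K_1}\cdots\Delta_{K_n}C)(K)=-\,\EE\Bigl[g_K\prod_{i=1}^n(1-g_{K_i})\Bigr],\qquad n\ge1.
\end{align*}
The base case uses $C(K)-C(K\cup K_1)=\EE[g_{K\cup K_1}]-\EE[g_K]=-\EE[g_K(1-g_{K_1})]$, and the inductive step applies one more difference operator together with $g_{L\cup K_{n+1}}=g_L\,g_{K_{n+1}}$. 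Since every factor $g_K$ and $1-g_{K_i}$ is an indicator, the expectation is nonnegative, so the left-hand side is $\le 0$; this is exactly complete alternation.

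\textbf{Sufficiency.} Given $C$ satisfying (i)--(iv), I would build the field through its finite-dimensional laws and Kolmogorov's extension theorem (no topology on $T$ is needed, as $Y$ is just a random element of $\{0,1\}^T$). For finite $M$ and $y\in\{0,1\}^M$ with zero-set $A=\{t:y_t=0\}$ and one-set $B=M\setminus A=\{s_1,\dots,s_k\}$, define the candidate mass
\begin{align*}
  p_M(y):=\sum_{B'\subseteq B}(-1)^{|B'|}Q(A\cup B')=(\Delta_{\{s_1\}}\cdots\Delta_{\{s_k\}}Q)(A).
\end{align*}
Because each $\Delta_{K}$ is linear and annihilates constants, $\Delta_{\{s_1\}}\cdots\Delta_{\{s_k\}}Q=-\Delta_{\{s_1\}}\cdots\Delta_{\{s_k\}}C$ for $k\ge1$, which is $\ge 0$ by (iv), while $p_M(y)=Q(M)\ge0$ when $B=\emptyset$; hence $p_M\ge0$. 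Next I would establish, by inclusion--exclusion, the identity $\sum_{y:\,y_t=0\;\forall t\in M'}p_M(y)=Q(M')$ for all $M'\subseteq M$, whose case $M'=\emptyset$ gives total mass $Q(\emptyset)=1$, so that $p_M$ is a genuine probability mass function $\mu_M$ on $\{0,1\}^M$. Consistency then follows because the functions $y\mapsto\prod_{t\in M'}(1-y_t)$, $M'\subseteq M$, span $\RR^{\{0,1\}^M}$, so the zero-events form a determining class: the projection of $\mu_{M\cup\{t'\}}$ onto $\{0,1\}^M$ and $\mu_M$ assign the same value $Q(M')$ to each of them and therefore coincide. Kolmogorov's theorem then yields a binary field $Y$ on $T$ with these marginals, and by construction $\PP(Y_t=0\ \forall t\in M)=Q(M)=1-C(M)$, so $Y$ has capacity $C$ with common one-dimensional marginal $C(\{t\})=p$.

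\textbf{Main obstacle.} The crux is the sufficiency direction, specifically the nonnegativity of the signed sums $p_M(y)$ together with the inclusion--exclusion identity: this is precisely where the full strength of complete alternation is used, since for large $M$ the low-order conditions (monotonicity and $2$-alternation) do not suffice to force $p_M\ge0$. Everything else---normalization, consistency, and the passage to the arbitrary index set $T$---is routine once these finite-dimensional positivity statements are in place.
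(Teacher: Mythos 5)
The paper offers no proof of this theorem at all: it is quoted verbatim from Molchanov's monograph on random sets, so there is no in-paper argument to compare against. Your proposal is correct and is, in substance, the standard discrete-Choquet argument that sits behind that citation. For necessity, your multiplicative family $g_M=\prod_{t\in M}(1-Y_t)$ with $g_Mg_K=g_{M\cup K}$ is exactly the right device; in fact no induction is needed, since expanding $\prod_{i=1}^n(1-g_{K_i})$ and using $\sum_{I\subseteq\{1,\dots,n\}}(-1)^{|I|}=0$ for $n\ge1$ (so the constant $1$ in $C=1-\EE[g_{\cdot}]$ drops out) yields $(\Delta_{K_1}\cdots\Delta_{K_n}C)(K)=-\EE\bigl[g_K\prod_{i=1}^n(1-g_{K_i})\bigr]\le 0$ in one line. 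For sufficiency, your three ingredients all check out: nonnegativity of $p_M(y)=(\Delta_{\{s_1\}}\cdots\Delta_{\{s_k\}}Q)(A)$ follows from (iv) when $k\ge1$ and from $Q\ge0$ when $k=0$; the identity $\sum_{y:\,y_t=0\ \forall t\in M'}p_M(y)=Q(M')$ is verified by reindexing over $D=B\setminus B'$, where the inner alternating sum vanishes unless $D=M\setminus M'$; and since the survival functions $y\mapsto\prod_{t\in M'}(1-y_t)$ are the multilinear monomials in $z_t=1-y_t$ and hence a basis of $\RR^{\{0,1\}^M}$, agreement on avoidance events forces the projections of the $\mu_M$ to be consistent, after which Kolmogorov extension (valid for arbitrary index sets with state space $\{0,1\}$, as you note) finishes the construction and gives $C(M)=1-Q(M)$ with constant marginal $p$. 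One small economy worth recording: $C\ge0$ is automatic from (ii) together with the $n=1$ case of (iv), which gives monotonicity of $C$, so of condition (i) only the upper bound $C\le1$, i.e.\ $Q\ge0$, does real work in your construction.
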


Surprisingly, we may characterize the possible extremal coefficient functions on $\finite(T)$ quite similarly:

\begin{theorem} \label{ecf_CA}
  \begin{enumerate}[a)]
  \item
    The function $\theta: \finite(T) \rightarrow \RR$ is the extremal coefficient function of a max-stable random field on $T$ with standard Fr{\'e}chet marginals if and only if the following three conditions are satisfied:
    \begin{enumerate}[(i)]
    \item  $\theta(\emptyset) = 0$
    \item  $\theta(\{t\}) = 1$ for all $t \in T$
    \item  $\theta$ is completely alternating.
    \end{enumerate}
  \item \label{ecf_process} If these conditions are satisfied, the following choice of coefficients 
    \begin{align*}
      \tau^M_L :=  - \Delta_{\{t_1\}} \dots \Delta_{\{t_l\}} \theta &(M \setminus L) =  
      \sum_{I \subset L} (-1)^{|I|+1} \theta((M \setminus L) \cup I)\\
      & \quad \forall \, M \in \finite(T)\setminus\{\emptyset\},\, \emptyset \neq  L = \{t_1,\dots,t_l\} \subset M
    \end{align*}
    for model (\ref{model}) defines a max-stable random field $X^*$ on $T$ with standard Fr{\'e}chet marginals which realizes $\theta$ as its own extremal coefficient function $\theta^*$.
  \end{enumerate}
\end{theorem}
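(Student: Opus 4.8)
The plan is to treat the two directions of part a) separately and to obtain the harder ``if'' direction as a byproduct of the explicit construction in part b). For the ``only if'' direction, conditions (i) and (ii) are immediate: $\theta(\emptyset)=0$ holds by convention and $\theta(\{t\})=\ell_{\{t\}}(\eins^{\{t\}}_{\{t\}})=1$ is the standard Fr\'echet normalization. The substance is (iii), which I would extract from the representation (\ref{theta_lHopital}). For each level $x>0$ consider the binary field $Y^{(x)}_t:=\Eins\{X_t\geq x\}$ and its capacity functional $C_x(M)=\PP(\exists\,t\in M:X_t\geq x)$. By Theorem \ref{cap_CA}, each $C_x$ is completely alternating. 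Since (\ref{theta_lHopital}) writes $\theta(M)=\lim_{x\uparrow\infty}C_x(M)/p_x$ with $p_x=\PP(X_t\geq x)>0$, and since the defining inequalities of complete alternation are linear and homogeneous in $\psi$, every rescaled functional $C_x/p_x$ is again completely alternating; the non-strict inequalities $\leq 0$ pass to the pointwise limit, so $\theta$ is completely alternating.

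For part b), which also supplies the ``if'' direction, I would verify that the proposed coefficients $(\tau^M_L)$ meet the three conditions (\ref{tau1}), (\ref{tau2}), (\ref{tau3}) identified earlier as sufficient for model (\ref{model}) to define a consistent max-stable field, and then compute its extremal coefficient function. Nonnegativity (\ref{tau1}) is exactly the assertion $(\Delta_{\{t_1\}}\cdots\Delta_{\{t_l\}}\theta)(M\setminus L)\leq 0$, which is the complete-alternation inequality (iii) specialized to singleton increments; the marginal condition (\ref{tau3}) reduces to $\tau^{\{t\}}_{\{t\}}=\theta(\{t\})-\theta(\emptyset)=1$ by (i)--(ii). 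For consistency I would prove (\ref{tau2}) directly from the difference-operator form: writing $\Delta_{\{t\}}\theta(M\setminus L)=\theta(M\setminus L)-\theta((M\setminus L)\cup\{t\})$ and using that the operators $\Delta_{\{\cdot\}}$ commute, a one-line manipulation gives $\tau^{M\cup\{t\}}_{L\cup\{t\}}=\tau^M_L-\tau^{M\cup\{t\}}_L$, which rearranges to (\ref{tau2}).

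It then remains to identify the extremal coefficient function $\theta^*$ of the constructed field with $\theta$. From the discrete spectral measure $H^*_M$ displayed above, (\ref{theta_intro}) yields $\theta^*(M)=\sum_{\emptyset\neq L\subset M}\tau^M_L$, since $\max_{t\in M}(\eins^M_L)_t=1$ for every nonempty $L$; by consistency it suffices to treat the case $A=M$. The main obstacle is thus the purely combinatorial inversion identity $\sum_{\emptyset\neq L\subset M}\tau^M_L=\theta(M)$, which I would establish by inclusion--exclusion. Substituting the explicit form of $\tau^M_L$ and regrouping the double sum by the argument $B=(M\setminus L)\cup I$ (so that the admissible $L$ are $L=(M\setminus B)\cup J$ with $J\subseteq B$ and $I=J$), each fixed $B$ with $\emptyset\neq B\subsetneq M$ contributes $\sum_{J\subseteq B}(-1)^{|J|+1}=0$, the term $B=\emptyset$ is annihilated by $\theta(\emptyset)=0$, and $B=M$ contributes $\sum_{\emptyset\neq J\subseteq M}(-1)^{|J|+1}=1$, leaving exactly $\theta(M)$.

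The commutativity of the $\Delta_{\{\cdot\}}$ also shows, as a preliminary remark, that $\tau^M_L$ is well defined independently of the enumeration $t_1,\dots,t_l$ of $L$. Assembling the pieces, model (\ref{model}) with these coefficients produces a max-stable random field with standard Fr\'echet marginals whose extremal coefficient function $\theta^*$ equals $\theta$; this proves part b) and simultaneously the ``if'' direction of part a). I expect the inversion identity in the third paragraph to be the only genuinely delicate computation, the remaining verifications being short once the difference-operator formalism is in place.
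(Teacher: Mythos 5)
Your proposal is correct and follows essentially the same route as the paper: the necessity of complete alternation via the exceedance fields $\Eins_{X_t\geq x}$ and Theorem \ref{cap_CA} together with scaling and pointwise limits, the verification of (\ref{tau1}), (\ref{tau2}), (\ref{tau3}) by the same difference-operator manipulations, and the final inversion identity by regrouping the double sum over $(L,I)$ according to $B=(M\setminus L)\cup I$, which is exactly the paper's regrouping by $K$. Your version merely spells out the combinatorial bookkeeping (the cases $B=\emptyset$, $\emptyset\neq B\subsetneq M$, $B=M$) and the well-definedness of $\tau^M_L$ a bit more explicitly than the paper does.
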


\begin{proof}
If $\theta$ is an extremal coefficient function of a max-stable random field $X$ on $T$, then necessarily  $\theta(\emptyset) = 0$ and $\theta(\{t\}) = 1$ for all $t \in T$ (cf. (\ref{theta_intro})). Because of (\ref{theta_lHopital}) we have
  \begin{align*}
    \theta(M) = \lim_{x \uparrow  \infty}
    \frac{\PP\left(\exists \, t \in M  \text{ such that } X_t \geq x \right)}{\PP\left(X_t \geq x \right)} 
    = \lim_{x \uparrow  \infty} \frac{C^{(x)}(M)}{p^{(x)}},
     \end{align*}
where $C^{(x)}$ denotes the capacity functional for the binary random field $Y_t=\Eins_{X_t \geq x}$ and $p^{(x)} = \EE Y_t = 1-e^{-1/x}$. Since complete alternation respects scaling and pointwise limits, complete alternation of $\theta$ follows from theorem \ref{cap_CA}. This shows the necessity of (i),(ii),(iii).

Now, let $\theta:\finite(T)\rightarrow \RR$ be a function satisfying conditions (i),(ii),(iii) and let $(\tau^M_L)$ as given above.  We check that the numbers $(\tau^M_L)$ fulfill the (in)equalities (\ref{tau1}),(\ref{tau2}),(\ref{tau3}). Indeed we have:
\begin{itemize}
\item[(\ref{tau1})]
The inequalities $\tau^M_L = - \Delta_{\{t_1\}} \dots \Delta_{\{t_l\}} \theta (M \setminus L) \geq 0$ follow directly from the complete alternation of $\theta$.
\item[(\ref{tau3})] For $t \in T$ we have $\tau^{\{t\}}_{\{t\}} = \theta_{\{t\}} = 1$.
\item[(\ref{tau2})] From the definition of $\Delta_{\{t\}}$ we observe
\begin{align*}
\tau^{M \cup \{t\}}_{L \cup \{t\}} 
&= - \Delta_{\{t\}}\Delta_{\{t_1\}} \dots \Delta_{\{t_l\}}\theta \big((M \cup \{t\}) \setminus (L \cup \{t\})\big) \\
&= - \Delta_{\{t_1\}} \dots \Delta_{\{t_l\}}\theta \big(M \setminus L\big) + \Delta_{\{t_1\}} \dots \Delta_{\{t_l\}}\theta \big(M \cup \{t\} \setminus L\big) \\
&=  \tau^M_L -\tau^{M \cup \{t\}}_L.
\end{align*}
\end{itemize}
Thus, $(\tau^M_L)$ define a max-stable random field $X^*$ on $T$ with standard Fr{\'e}chet marginals as given by model (\ref{model}). Finally, we compute the extremal coefficient function $\theta^*$ of $X^*$ and see that it coincides with $\theta$:
\begin{align*}
  \theta^*(M) & \stackrel{(\ref{model})}{=} \sum_{\emptyset \neq L \subset M} \tau^M_L
= \sum_{\emptyset \neq L \subset M}  \sum_{\substack{I \subset L}} (-1)^{|I|+1} \theta((M \setminus L) \cup I) \\
&= \sum_{\emptyset \neq K \subset M} \theta(K)  \sum_{\substack{\emptyset \neq L \subset M\\ M \setminus L \subset K}} (-1)^{|K \cap L|+1} 
= \sum_{\emptyset \neq K \subset M} \theta(K)  \left(-\left(-\eins_{K=M}\right)\right)
= \theta(M). \qedhere 
\end{align*}
\end{proof}

So, theorem  \ref{ecf_CA} is a complete characterization of all possible extremal coefficient functions $\theta: \finite(T) \rightarrow [0,\infty)$ of max-stable random fields, and it provides a max-stable random field $X^*$ corresponding to a given valid extremal coefficient function.
  In \cite{schlathertawn_02} the last issue of the proof is derived for finite sets $T$ by a Moebius inversion. The relation to their proof becomes more transparent if we  compute $\theta^*(A)$ for $A \subset M$ from $\left(\tau^M_L\right)_{\emptyset \neq L \subset M}$ directly:
  \begin{align}\label{thetaA.from.tauML}
    \theta^*(A) \stackrel{(\ref{model})}{=} \sum_{\emptyset \neq K \subset A}\tau^A_K 
    \stackrel{(\ref{tau4})}{=} \sum_{\emptyset \neq K \subset A} \, \sum_{J \subset M \setminus A} \tau^M_{K \cup J} = \sum_{L \subset M \pmid L \cap A \neq \emptyset} \tau^M_L
  \end{align}
  Further, it follows that the bivariate distribution of the random field $X^*$ is given by
  \begin{align}\label{modelbivariate}
    - \log \PP(X^*_s \leq x, X^*_t \leq y) & = \eta(s,t)\min\left(\frac{1}{x},\frac{1}{y}\right)+ \max\left(\frac{1}{x},\frac{1}{y}\right),
  \end{align}
  where $\eta=1-\chi$  (cf. (\ref{eta_intro})). We will use this later.

\subsection{Consequences of complete alternation of extremal coefficients}

Here, we collect some immediate consequences of theorem \ref{ecf_CA} that concern the closure of extremal coefficient functions $\theta$ and extremal correlation functions $\chi$ of max-stable random fields, the operation of Bernstein functions on such functions, and a reinterpretation of the role of coefficients $\tau^M_L$ from model $(\ref{model})$ in terms of an integral representation of $\theta$.

\paragraph{Closure} Since $\theta$ may be characterized by a list of (in)equalities as given by theorem \ref{ecf_CA}, we may take pointwise limits (which may be read as ``setwise'' limits since the points of $\finite(T)$ are in fact sets):

\begin{corollary}\label{ecf_closed} The set of extremal coefficient functions $\theta: \finite(T) \rightarrow [0,\infty)$ of max-stable random fields on a given space $T$ is closed under pointwise convergence.
\end{corollary}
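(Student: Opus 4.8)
The plan is to invoke the characterization of extremal coefficient functions provided by Theorem~\ref{ecf_CA}, which reduces the statement to the observation that each of its three defining properties survives pointwise limits. Concretely, I would let $(\theta_n)_{n \geq 1}$ be a sequence of extremal coefficient functions of max-stable random fields on $T$ converging pointwise to some $\theta:\finite(T) \to [0,\infty)$, meaning $\theta_n(M) \to \theta(M)$ for every $M \in \finite(T)$, and then verify that $\theta$ satisfies conditions (i), (ii), (iii) of Theorem~\ref{ecf_CA}. Since that theorem guarantees these conditions are not only necessary but sufficient, this exhibits $\theta$ as the extremal coefficient function of a max-stable random field, completing the proof.

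First I would dispatch the two normalization conditions. Evaluating the limit at the empty set gives $\theta(\emptyset) = \lim_n \theta_n(\emptyset) = 0$ by condition (i) for each $\theta_n$, and at a singleton gives $\theta(\{t\}) = \lim_n \theta_n(\{t\}) = 1$ by condition (ii). This establishes (i) and (ii) for the limit.

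The key step is preservation of complete alternation. For any fixed $n \geq 1$ and sets $K_1, \dots, K_n, K \in \finite(T)$, the expression
\[
\left(\Delta_{K_1} \cdots \Delta_{K_n} \theta\right)(K) = \sum_{I \subset \{1,\dots,n\}} (-1)^{|I|}\, \theta\Big(K \cup \bigcup_{i \in I} K_i\Big)
\]
is a finite linear combination of values of $\theta$ at finitely many points of $\finite(T)$. Because only finitely many set-arguments appear, this quantity equals the limit as $m \to \infty$ of the corresponding combinations for $\theta_m$, each of which is $\leq 0$ by the complete alternation of $\theta_m$; a limit of non-positive reals is non-positive, so the same inequality holds for $\theta$. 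As this holds for every choice of $n$ and of the sets, $\theta$ is completely alternating, which is condition (iii).

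I do not anticipate a genuine obstacle: the entire argument rests on the fact that Theorem~\ref{ecf_CA} recasts membership in the class as a system of equalities and finite linear inequalities, and any such system carves out a pointwise-closed set. The only point deserving care is that each complete-alternation inequality involves only finitely many set-arguments, so that pointwise convergence of $(\theta_n)$ already suffices to pass each individual inequality to the limit; no uniformity is required. Finiteness of the limit is automatic, since each $\theta_n$ takes values in $[1,|M|]$ on a nonempty $M$, a range inherited by $\theta$.
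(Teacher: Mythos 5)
Your proof is correct and follows exactly the paper's own (very brief) argument: the paper justifies Corollary~\ref{ecf_closed} with the single remark that Theorem~\ref{ecf_CA} characterizes $\theta$ by a list of (in)equalities, each involving only finitely many values, so that pointwise limits can be taken. You have simply spelled out that one-line argument in full detail, with no deviation in approach.
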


\begin{corollary}\label{countable_corr_closed}
If $T$ is countable, the set of extremal correlation functions $\chi: T \times T \rightarrow [0,1]$  of max-stable random fields on a given space $T$ (cf.(\ref{chi_intro})) is closed under pointwise convergence. 
\end{corollary}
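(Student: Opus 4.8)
The plan is to reduce this bivariate closure statement to the already-established closure of the \emph{full} extremal coefficient function (Corollary \ref{ecf_closed}), using the identity $\theta(\{s,t\}) = 2 - \chi(s,t)$ from (\ref{sumistwo}). The subtlety to keep in mind is that an extremal correlation function records only the bivariate extremal coefficients $\theta(\{s,t\})$, whereas Corollary \ref{ecf_closed} concerns $\theta$ on all of $\finite(T)$. Hence the real work is to manufacture, out of a pointwise convergent sequence of $\chi$'s, a pointwise convergent sequence of \emph{complete} extremal coefficient functions whose limit we may then invoke.

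First I would take a sequence $(\chi^{(n)})_n$ of extremal correlation functions converging pointwise to some $\chi \colon T \times T \to [0,1]$; note that the limit is automatically $[0,1]$-valued. Each $\chi^{(n)}$ is realized by a max-stable random field $X^{(n)}$, which carries an extremal coefficient function $\theta^{(n)} \colon \finite(T) \to [0,\infty)$ satisfying $\chi^{(n)}(s,t) = 2 - \theta^{(n)}(\{s,t\})$ by (\ref{sumistwo}). Since $\theta^{(n)}(\emptyset) = 0$ and $\theta^{(n)}(M) \in [1,|M|]$ for every nonempty $M \in \finite(T)$, the family $(\theta^{(n)}(M))_n$ is bounded for each fixed $M$.

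This is exactly where countability enters: as $T$ is countable, so is $\finite(T)$, and therefore the sequence $(\theta^{(n)})_n$ may be viewed as lying in a countable product of compact intervals. A diagonal argument then yields a subsequence $(\theta^{(n_k)})_k$ converging pointwise on all of $\finite(T)$ to some limit $\theta \colon \finite(T) \to [0,\infty)$. By Corollary \ref{ecf_closed} this limit $\theta$ is again the extremal coefficient function of a max-stable random field $X^*$, which can be exhibited explicitly through the construction of Theorem \ref{ecf_CA}.

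It then remains to read off the extremal correlation function of $X^*$. Applying (\ref{sumistwo}) once more, its extremal correlation function $\chi^*$ obeys, for all $s,t \in T$, $\chi^*(s,t) = 2 - \theta(\{s,t\}) = 2 - \lim_k \theta^{(n_k)}(\{s,t\}) = \lim_k \chi^{(n_k)}(s,t) = \chi(s,t)$, the final equality holding because the entire sequence $(\chi^{(n)})_n$ converges to $\chi$ and hence so does any subsequence. Thus $\chi$ is realized by $X^*$ and is an extremal correlation function, as claimed. The main obstacle is precisely the passage from bivariate data to the full coefficient function: the diagonal extraction that upgrades pointwise convergence on pairs to pointwise convergence on all of $\finite(T)$ relies on $\finite(T)$ being countable, which is what forces the countability hypothesis on $T$.
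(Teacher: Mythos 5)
Your proposal is correct and follows essentially the same route as the paper: lift the $\chi_n$ to their extremal coefficient functions $\theta_n$, use compactness of the countable product $\{0\}\times\prod_{M\in\finite(T)\setminus\{\emptyset\}}[1,|M|]$ (your diagonal extraction is just the sequential form of this) to obtain a pointwise convergent subsequence, invoke Corollary \ref{ecf_closed} on the limit $\theta$, and read off the correlation function via $\chi(s,t)=2-\theta(\{s,t\})$ from (\ref{sumistwo}). No gaps; this matches the paper's argument step for step.
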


\begin{proof} Let $\chi_n: T \times T \rightarrow [1,2]$ be extremal correlation functions of max-stable random fields for $n \in \NN$, such that we have $\lim_{n \to \infty} \chi_n(s,t) \rightarrow f(s,t)$ for all $(s,t) \in T \times T$.
Let $\theta_n: \finite(T) \rightarrow [0,\infty)$ be the corresponding extremal coefficient functions for $n \in \NN$.  On each finite set $M$, the value $\theta_n(M)$ is bounded by $|M|$. Thus, each $\theta_n$ in fact defines a point in the following product space ${\mathcal P}$ (by evaluation):
\begin{align*}
\theta_n =   \left(\theta_n(M)\right)_{M \in \finite(T)} \in \{0\} \times \prod_{M \in \finite(T)\setminus\{\emptyset \}} [1,|M|]=:{\mathcal P}
\end{align*}
The topology of pointwise convergence on $\finite(T)$ corresponds to the product topology on ${\mathcal P}\subset[0,\infty)^{\finite(T)}$. Since the intervals $[1,|M|]$ are compact and the product is taken over the countable set $\finite(T)$ (since $T$ is countable), the sequence of extremal coefficient functions $\theta_n: \finite(T) \rightarrow [0,\infty)$ possesses a convergent subsequence $\theta_{n'} \rightarrow \theta$ with respect to pointwise convergence. Corollary \ref{ecf_closed} now tells us that $\theta$ is again the extremal coefficient function of a max-stable random field on $T$. The observation that $f(s,t)=\lim_{n' \to \infty} \chi_{n'}(s,t)=\lim_{n' \to \infty} 2-\theta(\{s,t\}) = 2-\theta(\{s,t\})$ shows that the limit function $f$ on $T \times T$ is again an extremal correlation function of a max-stable random field. 
\end{proof}

\paragraph{Operation of Bernstein functions} 
Since condition (iii) in theorem \ref{ecf_CA} is equivalent to $\theta$ being negative definite, we get that Bernstein functions operate on extremal coefficient functions. 

\begin{definition} (Bernstein function) \\
A function $g:[0,\infty) \rightarrow [0, \infty)$  is called a {\em Bernstein function}  if one of the following equivalent conditions is satisfied (cf. \cite{bcr_84}, 4.4.3 and p. 141)
\begin{enumerate}[(i)]
\item The function $g$ is of the following form  for some $c,b \geq 0$ and $\nu$ a positive Radon measure on $(0,\infty)$, such that  $\int_0^\infty \frac{\lambda}{1+\lambda} \nu(\D \lambda) < \infty$: 
  \begin{align*}
    g(r)= c + br + \int_0^\infty \left(1-e^{-\lambda r }\right) \nu(\D \lambda)
  \end{align*}

\item $g$ is continuous and $g \in C^\infty((0,\infty))$ with $g \geq 0$ and $(-1)^n g^{(n+1)} \geq 0$ for all $n\geq 0$. (Here, $g^{(n)}$ denotes the $n$-th derivative of $g$.) 
\item $g$ is continuous, $g \geq 0$ and $g$ is negative definite as a function on the semigroup $([0,\infty),+,0)$.
\end{enumerate}
\end{definition}

For a comprehensive treatise on Bernstein functions including a table of examples see \cite{schillingetalii_10}.  
Since Bernstein functions operate on negative definite kernels (cf. \cite{bcr_84}, 3.2.9 and 4.4.3), we have the following corollary:

\begin{corollary} \label{theta_ops}
Let $\theta: \finite(T) \rightarrow \RR$ be an extremal coefficient function of a max-stable random field and $g$ a Bernstein function which is not constant. Then the function 
\[
M\, \mapsto 
\frac{g(\theta(M))-g(0)}{g(1)-g(0)}
\] 
is again an extremal coefficient function of a max-stable random field.  
\end{corollary}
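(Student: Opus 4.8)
The plan is to verify that the candidate function
\[
\tilde\theta(M) := \frac{g(\theta(M)) - g(0)}{g(1) - g(0)}, \qquad M \in \finite(T),
\]
satisfies the three conditions (i)--(iii) of Theorem \ref{ecf_CA}, from which the claim follows immediately. Note first that $\tilde\theta$ is well defined because $\theta$ takes values in $[0,\infty)$ (indeed in $[1,|M|]$ on nonempty sets), so the composition $g\circ\theta$ makes sense. Conditions (i) and (ii) are then a one-line check: since $\theta(\emptyset)=0$ we get $\tilde\theta(\emptyset)=(g(0)-g(0))/(g(1)-g(0))=0$, and since $\theta(\{t\})=1$ for every $t\in T$ we get $\tilde\theta(\{t\})=(g(1)-g(0))/(g(1)-g(0))=1$. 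The entire weight of the argument therefore rests on condition (iii), the complete alternation of $\tilde\theta$.

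For (iii) the decisive input is the cited composition principle that Bernstein functions operate on negative definite kernels (\cite{bcr_84}, 3.2.9 and 4.4.3): since $\theta\geq 0$ is negative definite on the idempotent semigroup $(\finite(T),\cup,\emptyset)$ — equivalently, completely alternating by Theorem \ref{ecf_CA} — and $g$ is a Bernstein function, the composition $g\circ\theta$ is again negative definite, hence completely alternating. It then remains to pass from $g\circ\theta$ to $\tilde\theta$ through the affine transformation $u\mapsto (u-g(0))/(g(1)-g(0))$, and I would do this directly at the level of the differences $\Delta_{K_1}\cdots\Delta_{K_n}$. Subtracting the constant $g(0)$ leaves every higher-order difference ($n\geq 1$) unchanged, because $\Delta_K$ annihilates constants, $(\Delta_K c)(L)=c-c=0$; and dividing by a positive number multiplies each difference by a positive constant, preserving the sign condition $(\Delta_{K_1}\cdots\Delta_{K_n}\tilde\theta)(K)\leq 0$. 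Hence $\tilde\theta$ is completely alternating.

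The one point requiring genuine (if small) care is the strict positivity of the denominator $g(1)-g(0)$, which the division above needs. Bernstein functions are non-decreasing, as is visible from $g'(r)=b+\int_0^\infty \lambda e^{-\lambda r}\,\nu(\D\lambda)\geq 0$ in the integral representation, so $g(1)\geq g(0)$; I would rule out equality by observing that $g(0)=g(1)$ together with monotonicity forces $g$ to be constant on $[0,1]$, whence $g'\equiv 0$ on $(0,1)$ gives $b=0$ and $\nu=0$, so $g$ is globally constant, contradicting the hypothesis. Thus $g(1)-g(0)>0$ and the normalization is legitimate.

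Finally, non-negativity of $\tilde\theta$ need not be checked separately: already the case $n=1$ of complete alternation yields $\tilde\theta(L)\leq\tilde\theta(L\cup K)$ for all $K,L$, so $\tilde\theta$ is monotone with respect to inclusion and $\tilde\theta(M)\geq\tilde\theta(\emptyset)=0$. I expect the main obstacle to be the composition theorem for Bernstein functions and negative definite kernels — this quoted result does all the heavy lifting — whereas everything else (the normalization, the treatment of the affine change via the difference operators, and the positivity of $g(1)-g(0)$) is routine bookkeeping.
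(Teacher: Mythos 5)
Your proof is correct and follows exactly the route the paper intends: the paper derives this corollary directly from the quoted fact that Bernstein functions operate on negative definite kernels (\cite{bcr_84}, 3.2.9 and 4.4.3) combined with the characterization in Theorem \ref{ecf_CA}, which is precisely your argument for condition (iii), with (i) and (ii) handled by the affine normalization. Your additional bookkeeping — that the difference operators annihilate the constant $g(0)$, that positive scaling preserves complete alternation, and that non-constancy of $g$ forces $g(1)-g(0)>0$ — fills in details the paper leaves implicit, and is all accurate.
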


For instance, if $\theta$ is a valid extremal coefficient function of a max-stable random field , then also $\log(1+\theta)/\log(2)$ or $\theta^q$ for $0<q<1$ are valid.
Of course, this leads also to the corresponding transformations on the set of bivariate quantities  $\chi$ and $\eta$ (cf. (\ref{chi_intro}),(\ref{eta_intro})). 

Further, the characterization of theorem \ref{ecf_CA} allows to derive many simple inequalities that an extremal coefficient function $\theta$ of a max-stable random field necessarily satisfies:

\begin{corollary}\label{thetaminusone_inequality}
Let  $\theta: \finite \rightarrow \RR$ be an extremal coefficient function of a max-stable random field and $g$ a Bernstein function.
Then the function $g(\theta-1): \finite\setminus\{\emptyset\} \rightarrow \RR $ satisfies the triangle inequality
\begin{align*}
g\left(\theta(A \cup B) - 1\right) \leq g\left(\theta(A \cup C) - 1\right) + g\left(\theta(C \cup B) - 1\right) &&& \text{for $A,B,C \in \finite(T)\setminus\{ \emptyset \}$.}
\end{align*}
\end{corollary}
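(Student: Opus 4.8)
The plan is to peel off the Bernstein function $g$ and reduce everything to the case $g=\id$, which is itself a Bernstein function. The key observation is that every Bernstein function is simultaneously non-decreasing and subadditive on $[0,\infty)$. Non-decreasingness and concavity are immediate from condition (ii) of the definition ($g'\geq 0$ and $g''\leq 0$, the cases $n=0,1$); alternatively, representation (i) gives $g''(r)=-\int_0^\infty \lambda^2 e^{-\lambda r}\,\nu(\D\lambda)\leq 0$ explicitly. Concavity together with $g(0)\geq 0$ then forces subadditivity: for $x,y\geq 0$ one writes $x$ and $y$ as convex combinations of $x+y$ and $0$, applies concavity to each, and adds to obtain $g(x)+g(y)\geq g(x+y)+g(0)\geq g(x+y)$.

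Granting the identity case, the general case follows at once. Writing $D(A,B):=\theta(A\cup B)-1$, which is $\geq 0$ since $A\cup B\neq\emptyset$, the chain
\[
g(D(A,B))\leq g\big(D(A,C)+D(C,B)\big)\leq g(D(A,C))+g(D(C,B))
\]
uses first monotonicity of $g$ together with the identity-case triangle inequality $D(A,B)\leq D(A,C)+D(C,B)$, and then subadditivity of $g$. This is exactly the asserted inequality.

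It therefore remains to prove the identity case, $\theta(A\cup B)\leq \theta(A\cup C)+\theta(C\cup B)-1$ for non-empty $A,B,C$. Here I would invoke complete alternation of $\theta$ directly. At level $n=1$, $(\Delta_{K_1}\theta)(K)\leq 0$ yields monotonicity $\theta(K)\leq\theta(K\cup K_1)$; at level $n=2$, $(\Delta_{K_1}\Delta_{K_2}\theta)(K)\leq 0$ rearranges to $\theta(K\cup K_1\cup K_2)\leq \theta(K\cup K_1)+\theta(K\cup K_2)-\theta(K)$. Applying the latter with $K=C$, $K_1=A$, $K_2=B$, and then bounding $\theta(A\cup B)\leq \theta(A\cup B\cup C)$ by monotonicity, gives $\theta(A\cup B)\leq \theta(A\cup C)+\theta(C\cup B)-\theta(C)$. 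Finally, since $C\neq\emptyset$, monotonicity and condition (ii) of theorem \ref{ecf_CA} give $\theta(C)\geq \theta(\{t\})=1$ for any $t\in C$, which completes the chain.

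The routine nature of the final rearrangement means the only genuinely substantive point is the reduction step, i.e. recognizing that the defining integral representation forces a Bernstein function to be concave and hence subadditive. This is precisely what lets a single complete-alternation inequality at level $n=2$ (augmented by monotonicity and $\theta(C)\geq 1$) carry the entire family of Bernstein functions $g$ at once, rather than having to argue separately for each $g$.
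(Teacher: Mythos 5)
Your argument is correct, and it takes a genuinely different route from the paper's. The paper keeps $g$ inside throughout: invoking \cite{bcr_84} (3.2.9, 4.4.3 and 8.2.7), it observes that $\theta-1$ is negative definite and nonnegative on $\finite(T)\setminus\{\emptyset\}$, that Bernstein functions preserve both properties, and hence that $f:=g(\theta-1)$ is itself \emph{completely alternating} on the semigroup $\finite(T)\setminus\{\emptyset\}$; the triangle inequality then falls out of the telescoping identity $f(C)+f(A\cup B)-f(A\cup C)-f(C\cup B)=\Delta_A\Delta_B f(C)+\Delta_C f(A\cup B)\leq 0$ combined with $f(C)\geq 0$. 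You instead peel $g$ off: you prove the $g=\id$ case directly from the order-one and order-two alternation inequalities for $\theta$, and your algebra --- $(\Delta_A\Delta_B\theta)(C)\leq 0$, monotonicity $\theta(A\cup B)\leq\theta(A\cup B\cup C)$, and $\theta(C)\geq 1$ --- is in fact the very same semigroup computation the paper performs, only applied to $\theta$ rather than to $g(\theta-1)$, with $\theta(C)\geq 1$ playing the role of $f(C)\geq 0$; you then transport the inequality through $g$ via monotonicity and subadditivity, the latter correctly deduced from concavity ($g''\leq 0$ from the level-set $n=1$ of condition (ii), or explicitly from representation (i)) together with $g(0)\geq 0$. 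What each approach buys: yours is elementary and self-contained, needing no theorem about Bernstein functions operating on negative definite kernels, and it makes visible that only the alternation inequalities of orders $n\leq 2$ for $\theta$ are actually used; the paper's is heavier but establishes the stronger intermediate fact that $g(\theta-1)$ is completely alternating \emph{to all orders} on $\finite(T)\setminus\{\emptyset\}$, which is of independent interest within the paper's harmonic-analysis framework and delivers the same triangle inequality uniformly for any nonnegative completely alternating function in a single stroke.
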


\begin{proof}
Since $\theta$ is an extremal coefficient function of a max-stable random field, it is completely alternating which is equivalent to being negative definite (in the semigroup sense, cf. theorem \ref{ecf_CA}). Subtracting $1$ does not change this property. But notice further that $\theta$ takes values in $\{0\} \cup [1,\infty)$, where the value $0$ is only attained for the empty set $\emptyset$. Thus, $\theta-1: \finite \setminus \{\emptyset\} \rightarrow \RR$ is negative definite and takes values only in $[0,\infty)$. Applying a Bernstein function $g$ does not change this property. By \cite{bcr_84} 8.2.7, this also means that $f:=g(\theta-1): \finite\setminus\{\emptyset\} \rightarrow \RR $ is completely alternating on $\finite\setminus\{\emptyset\}$. Since we have also $f \geq 0$ on $\finite\setminus\{\emptyset\}$, we may derive for $A,B,C \in \finite \setminus \{\emptyset\}$ 
\begin{align*}
&f(A \cup B)-f(A \cup C)-f(C \cup B)\\
&\leq f(C)+f(A \cup B)-f(A \cup C)-f(C \cup B)\\
&=(f(C)-f(A \cup C)-f(C \cup B)+f(A \cup B \cup C))+(f(A \cup B)-f(A \cup B \cup C))\\
&=\Delta_{A}\Delta_{B}f(C) + \Delta_{C}f(A \cup B) \leq 0. \qedhere
\end{align*}
\end{proof}

By letting $A=\{s\}$, $B=\{t\}$, $C=\{r\}$  in  corollary \ref{thetaminusone_inequality}, we may also derive such inequalities for the respective bivariate quantities:

\begin{corollary}\label{eta_inequality}
The function $\eta=1-\chi$ of a max-stable random field on $T$ (cf. (\ref{eta_intro})) satisfies for any Bernstein function $g$ the triangle inequality
\begin{align*}
g \circ \eta(s,t) \leq g \circ \eta(s,r) + g \circ \eta(r,t) &&& \text{for $s,t,r \in T$.}
\end{align*}
\end{corollary}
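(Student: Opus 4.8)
The plan is to obtain this bivariate triangle inequality as a direct specialization of Corollary \ref{thetaminusone_inequality}, which already establishes the analogous triangle inequality at the level of the multivariate extremal coefficient function. The bridge between the two statements is the identity (\ref{eta_intro}), namely $\eta(s,t) = \theta(\{s,t\}) - 1$, which expresses the bivariate function $\eta$ as a shift of the extremal coefficient function evaluated on two-point sets. Once this correspondence is in place, no further analytic input is needed.

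First I would invoke Corollary \ref{thetaminusone_inequality} with the three singleton sets $A = \{s\}$, $B = \{t\}$ and $C = \{r\}$. These are legitimate choices in $\finite(T) \setminus \{\emptyset\}$, and they remain admissible even when some of $s$, $t$, $r$ coincide: in that degenerate case the relevant two-point set collapses to a singleton, for which $\theta = 1$ and hence the corresponding $\eta$-value is $0$, so both sides of the inequality stay well defined. With these choices the three unions appearing in Corollary \ref{thetaminusone_inequality} become $A \cup B = \{s,t\}$, $A \cup C = \{s,r\}$ and $C \cup B = \{r,t\}$.

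Next I would rewrite each of the three terms using (\ref{eta_intro}): we have $\theta(A \cup B) - 1 = \eta(s,t)$, $\theta(A \cup C) - 1 = \eta(s,r)$ and $\theta(C \cup B) - 1 = \eta(r,t)$. Substituting these identities into the inequality furnished by Corollary \ref{thetaminusone_inequality} produces precisely $g(\eta(s,t)) \leq g(\eta(s,r)) + g(\eta(r,t))$, which is the asserted statement $g \circ \eta(s,t) \leq g \circ \eta(s,r) + g \circ \eta(r,t)$.

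Since the argument is entirely a substitution, I do not expect any genuine obstacle; the only thing to check is that the specialization to singletons is permissible, which is immediate. The substantive work was already done in the proof of Corollary \ref{thetaminusone_inequality}, where the complete alternation of $\theta$ (equivalently, its negative definiteness in the semigroup sense, cf. Theorem \ref{ecf_CA}) together with the stability of that property under a Bernstein function $g$ was exploited to produce the triangle inequality for arbitrary finite sets $A,B,C$.
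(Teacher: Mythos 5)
Your proof is correct and is exactly the paper's argument: the paper obtains Corollary \ref{eta_inequality} precisely by specializing Corollary \ref{thetaminusone_inequality} to the singletons $A=\{s\}$, $B=\{t\}$, $C=\{r\}$ and rewriting via $\eta(s,t)=\theta(\{s,t\})-1$ from (\ref{eta_intro}). Your extra remark that coincident points pose no problem is a harmless (and correct) addition.
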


\paragraph{Integral representation} 
Theorem 2 also allows to reinterpret the role of the coefficients $\tau^M_L$ of the max-linear model (\ref{model}) within the framework of integral representations of negative definite functions.

\begin{definition}
  A {\em bounded semicharacter} $\rho$ on $\finite(T)$ is a function $\rho: \finite(T) \rightarrow \{0,1\}$ that satisfies $\rho(\emptyset)=1$ and $\rho(A \cup B) = \rho(A)\rho(B)$ for $A,B \in \finite(T)$. 

The set of bounded semicharacters on $\finite(T)$ is denoted $\widehat{\finite(T)}$ and forms itself a semigroup with respect to pointwise multiplication with neutral element the constant function $1$.
\end{definition}

Since we have theorem \ref{ecf_CA}, we may apply \cite{bcr_84} 4.1.16, that is
any extremal coefficient function $\theta$ of a max-stable random field allows for an integral representation: 
\begin{corollary} \label{intrep}
Let $\theta: \finite(T) \rightarrow \RR$ be an extremal coefficient function of a max-stable random field. Then $\theta$ uniquely determines a positive Radon measure $\mu$ on $\widehat{\finite(T)}\setminus\{1\}$ such that 
\begin{align*} 
\theta(A) =  \mu(\{\rho \in \widehat{\finite(T)} \setminus \{1\} \mid \rho(A)=0 \})
\end{align*}
where $\theta(\{t\})=1$ for $t \in T$.
The function $\theta$ is bounded if and only if $\mu(\widehat{\finite(T)} \setminus \{1\})  <  \infty$.
\end{corollary}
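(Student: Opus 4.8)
The statement is an instance of the L\'evy--Khinchine-type integral representation for negative definite functions on abelian semigroups, so the plan is to reduce it to the cited result \cite{bcr_84}, 4.1.16 applied to the idempotent semigroup $(\finite(T),\cup,\emptyset)$ and then to specialize the general representation to the present situation. First I would record the two structural inputs supplied by theorem \ref{ecf_CA}: the extremal coefficient function $\theta$ is negative definite (equivalently completely alternating) and satisfies $\theta(\emptyset)=0$. These are exactly the hypotheses under which the semigroup representation theorem produces a uniquely determined positive Radon measure $\mu$ on the nontrivial semicharacters, together with a possible additive part.

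The key simplification is that idempotency collapses the general representation. Since $A\cup A=A$, every semicharacter $\rho$ obeys $\rho(A)=\rho(A)^2$, so $\rho$ takes values only in $\{0,1\}$; in particular every semicharacter is automatically bounded, and $\widehat{\finite(T)}$ really is the full dual. The same identity forces any additive term $L$ (the quadratic part of the general representation) to vanish, because $L(A)=L(A\cup A)=2L(A)$. Feeding $\theta(\emptyset)=0$ into the representation and using $1-\rho(A)=\Eins_{\{\rho(A)=0\}}$ then leaves exactly
\[
\theta(A)=\int_{\widehat{\finite(T)}\setminus\{1\}}\bigl(1-\rho(A)\bigr)\,\mu(\D\rho)=\mu\bigl(\{\rho\in\widehat{\finite(T)}\setminus\{1\}\mid\rho(A)=0\}\bigr),
\]
with uniqueness of $\mu$ inherited from the cited theorem. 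The normalization $\theta(\{t\})=1$ is simply condition (ii) of theorem \ref{ecf_CA} re-expressed through this formula and needs no separate argument.

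For the final equivalence I would exploit the monotonicity $\{\rho\mid\rho(A)=0\}\subseteq\{\rho\mid\rho(A')=0\}$ whenever $A\subseteq A'$, which follows from $\rho(A')=\rho(A)\,\rho(A'\setminus A)$. These clopen sets thus form a family directed by inclusion of the indexing finite sets, and their union is all of $\widehat{\finite(T)}\setminus\{1\}$ because every nontrivial $\rho$ vanishes on some singleton, hence on some finite set. The hard part will be passing to the total mass when $T$ is uncountable, where one cannot invoke ordinary continuity from below along a sequence. I would instead use inner regularity of the Radon measure $\mu$: any compact subset is covered by finitely many of the clopen sets $\{\rho\mid\rho(A)=0\}$, and since this family is directed their finite union is again of the form $\{\rho\mid\rho(A^*)=0\}$ with $A^*$ the union of the relevant finite sets. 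This yields $\mu(\widehat{\finite(T)}\setminus\{1\})=\sup_{A\in\finite(T)}\mu(\{\rho\mid\rho(A)=0\})=\sup_{A}\theta(A)$, from which $\theta$ is bounded if and only if $\mu(\widehat{\finite(T)}\setminus\{1\})<\infty$.
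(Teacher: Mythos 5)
Your proposal is correct and takes essentially the same route as the paper: the paper's entire proof is the invocation of \cite{bcr_84} 4.1.16 for negative definite functions on the idempotent semigroup $(\finite(T),\cup,\emptyset)$, given that theorem \ref{ecf_CA} supplies negative definiteness, $\theta(\emptyset)=0$ and $\theta(\{t\})=1$, exactly as you argue. Your supplementary details are sound elaborations of what that citation packages rather than a different proof --- idempotency forcing $\rho(A)=\rho(A)^2\in\{0,1\}$ and killing the additive part, the identity $1-\rho(A)=\Eins_{\{\rho(A)=0\}}$, and the boundedness equivalence via the upward-directed family of clopen sets $\{\rho \pmid \rho(A)=0\}$ (note $\bigcup_i\{\rho \pmid \rho(A_i)=0\}=\{\rho \pmid \rho(\bigcup_i A_i)=0\}$) combined with inner regularity of the Radon measure, which correctly yields $\mu(\widehat{\finite(T)}\setminus\{1\})=\sup_{A\in\finite(T)}\theta(A)$.
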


If we restrict ourselves to a finite non-empty subset $M \subset T$, the restricted extremal coefficient function $\theta_M: \finite(M) \rightarrow [0,|M|]$ is still completely alternating and corollary \ref{intrep} applies, of course. But now, since we consider only a finite $M$, we can use the  additional isomorphism of finite (topologically discrete) idempotent abelian semigroups:
\begin{align*}
  (\widehat{\finite(M)},\cdot,1) \cong (\finite(M),\cup,\emptyset) \qquad \rho \mapsto \bigcap_{L \in \rho^{-1}(1)} (M \setminus L) 
\end{align*} 
in order to reformulate corollary \ref{intrep} for this situation:
Any such restricted extremal coefficient function $\theta_M: \finite(M) \rightarrow \RR$ uniquely determines a positive Radon measure $\mu_M$ on $\finite(M)\setminus\{\emptyset\}$ such that 
\begin{align*} 
\theta_M(A) =  \mu_M(\{ L \in \finite \setminus \{\emptyset\} \mid A \cap L \neq \emptyset \}) = \sum_{L \subset M \pmid A \cap L \neq \emptyset} \mu(\{L\})
\end{align*}
where $\sum_{t \in L} \mu_M(\{L\})=1$  for $t \in T$. 
A comparison with (\ref{thetaA.from.tauML}) reveals that 
\begin{align*}
\mu_M(\{L\})=\tau^M_L.
\end{align*}

\section{Convex geometry of extremal coefficients}\label{convexgeometry}
We will fix again a finite non-empty subset $M \subset T$. Remember that we denoted by $\DepSet_M$ the {\em dependency set} of $\{X_t\}_{t \in M}$ (cf. (\ref{depSet}))  which was introduced in \cite{molchanov_08} and which is the unique compact convex set $\DepSet_M \subset [0,\infty)^M$ satisfying 
\begin{align*}
  \ell_M(x)=\sup\{ \langle x,y \rangle \pmid y \in \DepSet_M\} 
  \qquad \forall \, 0\neq x \in [0,\infty)^M.
\end{align*}
Here, we will show that the dependency sets $\DepSet^*_M$ of our max-linear model (\ref{model}) play an exceptional role among all dependency sets sharing the same extremal coefficients:

\begin{theorem}\label{KMincludedinHalfspaceIntersection}
Consider a max-stable random field with extremal coefficient function  $\theta$ and denote $\DepSet_M$ its dependency set for a finite non-empty $M \subset T$. Further denote 
\begin{align*} \Half_L:=\{ x \in [0,\infty)^M \pmid \langle x , \eins^M_L\rangle \leq \theta(L) \} 
\end{align*} 
the half space in $[0,\infty)^M$ bounded by the plane with defining equation $\langle x , \eins^M_L\rangle = \theta(L)$.
\begin{enumerate}[a)]
\item\label{KMinHSIa}
Then $\DepSet_M \subset \bigcap_{\emptyset \neq L \subset M} \Half_L$.
\item\label{KMinHSIb} For each $\emptyset \neq L \subset M$ there exists a point $x^L$ in the intersection $x^L \in \DepSet_M \cap \Half_L$.
\item \label{KMinHSIc} The dependency set $\DepSet^*_M$ of $(X^*_t)_{t \in M}$ from model (\ref{model}) with extremal coefficient function  $\theta$ is given by $\DepSet^*_M = \bigcap_{\emptyset \neq L \subset M} \Half_L$.
\end{enumerate}
\end{theorem}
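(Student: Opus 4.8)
The plan is to handle the three parts in increasing order of difficulty, relying throughout on the single identity $\theta(L)=\ell_M(\eins^M_L)$ (cf. (\ref{theta_intro})) together with the defining property (\ref{depSet}), which says precisely that $\ell_M$ is the support function of $\DepSet_M$ on the positive orthant, i.e. $\ell_M(x)=\sup\{\langle x,y\rangle \pmid y \in \DepSet_M\}$ for $0 \neq x \in [0,\infty)^M$. Part a) is then immediate: for any $y \in \DepSet_M$ and any $\emptyset \neq L \subset M$ one has $\langle \eins^M_L, y\rangle \leq \sup\{\langle \eins^M_L, z\rangle \pmid z \in \DepSet_M\} = \ell_M(\eins^M_L) = \theta(L)$, so $y \in \Half_L$, and intersecting over $L$ gives $\DepSet_M \subset \bigcap_{\emptyset \neq L \subset M}\Half_L$. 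For part b) I would invoke compactness of $\DepSet_M$: the linear functional $y \mapsto \langle \eins^M_L, y\rangle$ attains its supremum $\theta(L)$ at some $x^L \in \DepSet_M$, which therefore lies in $\DepSet_M$ and on the bounding hyperplane $\langle x,\eins^M_L\rangle = \theta(L)$ of $\Half_L$; this is the meaningful content, namely that each constraint appearing in a) is tight.

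Part c) is the crux. Write $P := \bigcap_{\emptyset \neq L \subset M}\Half_L$. First I would check that $P$ is compact convex: it is closed and convex as an intersection of closed half-spaces, and the singleton constraints $\Half_{\{t\}}=\{x \pmid x_t \leq \theta(\{t\}) = 1\}$ force $P \subset [0,1]^M$. By the uniqueness in (\ref{depSet}) it then suffices to show that the support function $h_P(x):=\sup\{\langle x,y\rangle \pmid y \in P\}$ coincides with $\ell^*_M$ on $[0,\infty)^M$, where from the discrete spectral measure $H^*_M$ one reads off $\ell^*_M(x)=\sum_{\emptyset \neq L \subset M}\tau^M_L \max_{t \in L}x_t$. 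One inequality comes for free: part a) applied to $X^*$ gives $\DepSet^*_M \subset P$, hence $\ell^*_M = h_{\DepSet^*_M} \leq h_P$.

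The reverse inequality $h_P \leq \ell^*_M$ is the real work, and here I would use a superlevel-set (layer-cake) decomposition. For $u \geq 0$ set $L(u):=\{t \in M \pmid x_t > u\}$. Then $x_t = \int_0^\infty \eins_{x_t > u}\,\D u$ yields $\langle x,y\rangle = \int_0^\infty \langle \eins^M_{L(u)}, y\rangle \, \D u$, while $\max_{t \in K}x_t = \int_0^\infty \eins_{K \cap L(u)\neq\emptyset}\,\D u$ yields, after summing against $\tau^M_K$ and applying the identity $\sum_{K \pmid K \cap L \neq \emptyset}\tau^M_K = \theta(L)$ recorded in (\ref{thetaA.from.tauML}), the representation $\ell^*_M(x)=\int_0^\infty \theta(L(u))\,\D u$. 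Since $y \in P$ means exactly $\langle \eins^M_{L}, y\rangle \leq \theta(L)$ for every nonempty $L$ (and both integrands vanish when $L(u)=\emptyset$), the integrands satisfy the pointwise bound $\langle \eins^M_{L(u)}, y\rangle \leq \theta(L(u))$, so integrating gives $\langle x,y\rangle \leq \ell^*_M(x)$. Taking the supremum over $y \in P$ gives $h_P \leq \ell^*_M$, whence $h_P = \ell^*_M$ on $[0,\infty)^M$ and, by uniqueness in (\ref{depSet}), $P = \DepSet^*_M$.

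The principal obstacle is exactly this reverse inclusion in c). The decisive device is the superlevel-set decomposition, whose success hinges on recognizing the representation $\ell^*_M(x)=\int_0^\infty \theta(L(u))\,\D u$, which itself rests on the combinatorial identity $\sum_{K \cap L \neq \emptyset}\tau^M_K = \theta(L)$ already established in (\ref{thetaA.from.tauML}); once that identity is available, the inequality collapses to a trivial pointwise comparison of integrands, and everything else (parts a) and b), and the easy inclusion $\DepSet^*_M \subset P$) is routine.
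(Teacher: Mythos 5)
Your proposal is correct and follows essentially the same route as the paper: parts a) and b) are verbatim the paper's argument, and in part c) your layer-cake identities $\langle x,y\rangle=\int_0^\infty\langle \eins^M_{L(u)},y\rangle\,\D u$ and $\ell^*_M(x)=\int_0^\infty\theta(L(u))\,\D u$ are exactly the continuous form of the paper's decomposition of $x$ (after sorting coordinates) into a nonnegative combination of the indicators $\eins^M_{\{t_1,\dots,t_i\}}$, with the identity (\ref{thetaA.from.tauML}) playing the identical role. Your appeal to the uniqueness in (\ref{depSet}) is the same maximality fact the paper invokes via the half-space representation $\DepSet^*_M=\bigcap_{x\in S_M}\{y \pmid \langle x,y\rangle\leq\ell_M(x)\}$, so the two arguments coincide in substance.
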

\begin{proof}
\begin{enumerate}[a)]
\item Let $\emptyset \neq L \subset M$  and $x \in \DepSet_M$. Assuming $\langle x,\eins^M_L \rangle > \theta(L)$ we would have the contradiction $\theta(L)=\ell_M(\eins^M_L)=\sup\{\langle x,\eins^M_L \rangle \pmid x \in \DepSet_M\} > \theta(L)$. So $\langle x,\eins^M_L \rangle \leq \theta(L)$ and part \ref{KMinHSIa}) is proven.
\item  Since $\DepSet_M$ is compact, we have  $\theta(L)=\ell_M(\eins^M_L)=\sup\{\langle x,\eins^M_L \rangle \pmid x \in \DepSet_M\} = \langle x(L),\eins^M_L \rangle$ for some $x^L \in \DepSet_M$. This shows part \ref{KMinHSIb}).
\item
Denote the right-hand side  by 
\begin{align*}
\PotentialDepSet_M := \bigcap_{\emptyset \neq L \subset M} \Half_L = \{ x \in [0,\infty)^M \pmid \langle x , \eins^M_L\rangle \leq \theta(L) \text{ for all } \emptyset \neq L \subset M\}.
\end{align*}
The inclusion $\DepSet^*_M \subset \PotentialDepSet_M$ is proven in part \ref{KMinHSIa}). So, we still need to show the other inclusion $\PotentialDepSet_M \subset \DepSet^*_M$. By definition (cf. (\ref{depSet})), $\DepSet^*_M \subset [0,1]^M$ is the unique compact convex set satisfying $ \ell_{M}(x)=\sup\{ \langle x,y \rangle \pmid y \in K^*_{M}\}$ for $0 \neq x \in [0,\infty)^M$.  
The closed convex set $\DepSet^*_M$ may also be described as the following intersection of half spaces (cf. \cite{schneider_93}, section 1.7.):
\begin{align*}
\DepSet^*_M= \bigcap_{x \in S_M} \{ y \in [0,\infty)^M \pmid \langle x,y \rangle \leq \ell_M(x)\}.
\end{align*}
Thus, it suffices to show the following implication in order to prove $\PotentialDepSet_M \subset \DepSet^*_M$:
\begin{align*}
x \in S_M \text{ and } y \in \PotentialDepSet_M \quad \Longrightarrow \quad 
\langle x,y \rangle \leq \ell_M(x) 
\end{align*}
We now prove this implication: Without loss of generality, we may label the elements of $M=\{t_1, \dots, t_m\}$ such that $x_{t_1} \geq x_{t_2} \geq \dots \geq x_{t_m}$.  Then we may write $x \in S_M \subset [0,\infty)^M$ as
\begin{align*}
x = \underbrace{x_{t_m}}_{\geq 0} \eins^M_M + \underbrace{(x_{t_{n-1}}-x_{t_m})}_{\geq 0} \eins^M_{M \setminus \{t_m\}} + \dots + \underbrace{(x_{t_{2}}-x_{t_3})}_{\geq 0} \eins^M_{\{t_1,t_2\}} + \underbrace{(x_{t_{1}}-x_{t_2})}_{\geq 0} \eins^M_{\{t_1\}}
\end{align*}
Taking the scalar product with $y \in \PotentialDepSet_M$, we conclude 
\begin{align}
\notag \langle x,y \rangle 
& \leq x_{t_m} \theta(M) + (x_{t_{n-1}}-x_{t_m}) \theta(M \setminus \{t_m\}) + \dots \\ \notag & \hspace{4cm} \dots + (x_{t_{2}}-x_{t_3}) \theta(\{t_1,t_2\}) + (x_{t_{1}}-x_{t_2}) \theta(\{t_1\})\\
\label{ellcoincide} & = x_{t_m} (\theta(M)-\theta(M \setminus \{t_m\}))  + \dots + x_{t_{2}} (\theta(\{t_1,t_2\})-\theta(\{t_1\})) + x_{t_{1}} \theta(\{t_1\})
\end{align}
On the other hand the stable tail dependence function $\ell_M$ of model (\ref{model}) is by this ordering of the components of $x$ given as 
\begin{align*}
\ell_M(x) &= \sum_{\emptyset \neq L \subset M} \tau^M_L \max_{t \in L} x_{t} = \sum_{i=1}^m x_{t_i} \left( \sum_{L \subset M \pmid t_1,\dots,t_{i-1} \notin L, t_{i} \in L} \tau^M_L \right)
\end{align*}
From formula (\ref{thetaA.from.tauML}) we see that this expression of $\ell_M(x)$ coincides with the right-hand side of $(\ref{ellcoincide})$. Thus, we have our desired inequality $\langle x,y \rangle \leq \ell_M(x)$. This shows part \ref{KMinHSIc}). \qedhere
\end{enumerate}
\end{proof}

\begin{figure}[htb]
\centerline{
  \xymatrix@=17pt@*=0{
&&&& \ar@{<-}^(.2){\txt{$x_3$}}[dd] 
  \\ \\
&&&& {\punkt} \ar@{-}^-{\txt{$\tau^{23}_2$}}[rrrr] &&&& {\punkt} 
\\
&&&&&&& {\punkt} \ar@{-}_-{\txt{$\tau^{123}_1$}}[ur] 
\\
&& {\punkt} \ar@{-}^-{\txt{$\tau^{13}_1$}}[uurr] \ar@{-}_-{\txt{$\tau^{123}_2$}}[rrr] &&& {\punkt} \ar@{-}[urr] 
&&&&& {\punkt} \ar@{-}[uull]
\\
&&&&&&&&& {\punkt} \ar@{-}[uull] \ar@{-}^-{\txt{$\tau^{123}_1$}}[ur]
\\ \\
& {\punkt} \ar@{-}[uuur] \ar@{-}^-{\txt{$\tau^{123}_2$}}[rrr] &&& {\punkt} \ar@{-}[uuur] 
&&&& {\punkt} \ar@{-}[uur] 
\\
&&&& {\punkt} \ar@{--}^<<<<<<<<<<<<<{\txt{\textcolor{taucol}{$\tau^{3}_3$}}}[uuuuuu] \ar@{--}_<<<<<<<<<<<{\txt{\textcolor{taucol}{$\tau^{2}_2$}}}[rrrrrr] \ar@{--}_-{\txt{\textcolor{taucol}{$\tau^{1}_1$}}}[dddlll]  
& {\punkt} \ar@{-}[ul] \ar@{-}[urrr]  &&&&& {\punkt} 
\ar@{-}_-{\txt{$\tau^{23}_3$}}[uuuu] && \ar@{<-}^(.2){\txt{$x_2$}}[ll] 
\\ \\
&&&&&&&& {\punkt} \ar@{-}^-{\txt{$\tau^{123}_3$}}[uuu] \ar@{-}_-{\txt{$\tau^{12}_1$}}[uurr]
\\ 
& {\punkt} \ar@{-}^-{\txt{$\tau^{13}_3$}}[uuuu] \ar@{-}_-{\txt{$\tau^{12}_2$}}[rrrr] 
&&&& {\punkt} \ar@{-}_-{\txt{$\tau^{123}_3$}}[uuu] \ar@{-}[urrr] 
\\
\ar@{<-}_(.2){\txt{$x_1$}}[ur]
}
}
\caption{Dependency set $\DepSet^*_M$ of $\{X^*_{t}\}_{t \in M}$ of model (\ref{model}) for $M = \{1,2,3\}$.}
\label{dependencysetfigure}
\end{figure}

So, if we fix the extremal coefficient function $\theta$ of a max-stable random field on $T$, then model (\ref{model}) yields a maximal dependency set $\DepSet^*_M$ w.r.t. inclusion, that is
\begin{align*}
\DepSet^*_M = \bigcup_{\substack{\DepSet_M \text{ dependency set}\\ \text{with the same extremal coefficients as } \DepSet^*_M}} \DepSet_M. 
\end{align*}
It is an open problem and it would be interesting to know whether there exist also minimal dependency sets and if they would help to better understand the classification of all dependency structures. A very naive idea would be to take for each $L$ in each of the sets $\{ x \in [0,\infty)^M \pmid \langle x , \eins^M_A \rangle = \theta(A) \} \cap \bigcap_{L \subset M, L \neq \emptyset} \Half_L$ for $\emptyset \neq A \subset M$ one point and then to take the convex hull with 0 included. However this fails to be a dependency set in dimensions $|M| \geq 3$, since it is not even a zonoid, which would be necessary (cf. \cite{molchanov_08}).

\section{Continuity}\label{continuity}

\paragraph{Continuity of functions on $\finite(T)$} In this section we require $T$ to be a metric space. We technically prepare the notion of continuity that we will use in connection with extremal coefficient functions $\theta: \finite(T) \rightarrow [0,\infty)$. Therefore let $f: \finite(T) \rightarrow \RR$ be a function on the finite subsets of $T$. Then $f$ induces a familiy of functions $\{f^{(m)}\}_{m \geq 0}$ where $f^{(m)}: T^m \rightarrow \RR$ is given by \begin{align}\label{fcorrespondency}
f^{(m)}(t_1,\dots,t_m)=f(\{t_1,\dots,t_m\}).
\end{align}
We see that for each $m$ the induced function $f^{(m)}:T^m\rightarrow \RR$ is symmetric, that is for any permutation $\pi \in S_m$ 
\begin{align}\label{fsymmetry}
f^{(m)}(t_1,\dots,t_m)=f^{(m)}(t_{\pi(1)},\dots,t_{\pi(m)}).
\end{align}
Further, the family $f^{(m)}:T^m\rightarrow \RR$ is consistent in the following way: For any non-negative integers $k_1,\dots,k_r$ with $\sum_{i=1}^r k_i=m$ and $(t_1,\dots,t_r) \in T^r$ we have
\begin{align}\label{fconsistency}
f^{(m)}(\underbrace{t_1,\dots,t_1}_{k_1 \text{ times}},\dots,\underbrace{t_r,\dots,t_r}_{k_r \text{ times}}) = f^{(r)}(t_1,\dots,t_r).
\end{align}
On the other hand, given functions $\{f^{(m)}\}_{m \geq 0}$ which satisfy (\ref{fsymmetry}) and (\ref{fconsistency}), they define a well-defined function $f: \finite(T) \rightarrow \RR$ via (\ref{fcorrespondency}). 

\begin{definition} 
Let $f: \finite(T) \rightarrow \RR$ be a function on the finite subsets of a metric space $T$. We say that $f$ is {\em continuous} if all induced functions $f^{(m)}:T^m \rightarrow \RR$ are continuous for all $m \geq 0$.
\end{definition}

\begin{remark}
To be more precise, this notion of continuity for functions on $\finite(T)$ used in this definition corresponds to the final topology  on $\finite(T)$ when we view it as the direct limit $\finite(T)=\varinjlim \finite^m(T)$ where $\finite^m(T)$ denotes the set of finite subsets of $T$ with number of elements less than or equal to $m$, and where $\finite^m(T)=T^m/\sim$ carries the quotient topology of $T^m$.
\end{remark}

\paragraph{Continuity of $\theta$ and $\chi$}

For an extremal coefficient function $\theta: \finite(T) \rightarrow \RR$  we denote its induced functions as described in the paragraph above by $\theta^{(m)}$. 
\begin{theorem}\label{cty_chitheta}
Let $X$ be a max-stable random field on a metric space $T$ with standard Fr{\'e}chet marginals, $\theta$ its extremal coefficient function and $\chi$ its extremal correlation function. Further, let $X^*$ be the random field from model (\ref{model}) that realizes $\theta$ as its extremal coefficient function, and thus also $\chi$ as its extremal correlation function (cf. theorem \ref{ecf_CA}). 
Then the following implications hold:

\begin{enumerate}[a)]
\item $X$ is stochastically continuous.  $\quad \Longrightarrow \quad \theta$  is continuous. $\quad \Longrightarrow \quad \chi$ is continuous.
\item \label{cty_ineq} The function $\eta=1-\chi$ (cf. (\ref{eta_intro})) satisfies for any $\varepsilon > 0$ 
\[
\PP(|X^*_s - X^*_t| > \varepsilon) \leq 2 \left( 1 - \exp\left( - \frac{\eta(s,t)}{\varepsilon} \right)\right) \leq \frac{2}{\varepsilon} \, \eta(s,t).
\]
\item $\chi$ is continuous. $\quad \Longrightarrow \quad$  $X^*$ is stochastically continuous.
\end{enumerate}
\end{theorem}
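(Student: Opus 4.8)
The plan is to treat the three implications essentially separately, with part~b) carrying the real content and part~c) falling out of b) once one knows the behaviour of $\chi$ on the diagonal. Throughout I would exploit that $\theta(M)$ is encoded in a single distribution-function value, namely $\theta(M) = -\log\PP(\max_{t\in M}X_t \le 1)$, which follows from (\ref{theta_intro}) since $-\log\PP(X_t\le x)=1/x$ and hence $\PP(\max_{t\in M}X_t\le x)=\exp(-\theta(M)/x)$.

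For the first chain in part~a), fix $m$ and a convergent sequence $(t_1^{(n)},\dots,t_m^{(n)}) \to (t_1,\dots,t_m)$ in $T^m$. Stochastic continuity gives $X_{t_i^{(n)}} \to X_{t_i}$ in probability for each $i$, hence joint convergence in probability of the vectors, and therefore $\max_i X_{t_i^{(n)}} \to \max_i X_{t_i}$ in probability by the continuous mapping theorem. Because the limiting variable has the continuous distribution function $x\mapsto\exp(-\theta(\{t_1,\dots,t_m\})/x)$, the associated distribution functions converge at every $x>0$; evaluating at $x=1$ gives $\theta^{(m)}(t^{(n)}) \to \theta^{(m)}(t)$, so each $\theta^{(m)}$ is sequentially and hence continuous. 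The second implication is immediate from (\ref{sumistwo}): $\chi(s,t) = 2 - \theta^{(2)}(s,t)$, so continuity of $\theta$ forces continuity of $\chi$.

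The core is the estimate in part~b), where I would use the explicit max-linear structure of $X^*$ on the pair $M=\{s,t\}$. The coefficients of model (\ref{model}) for this pair are $\tau^M_{\{s\}}=\tau^M_{\{t\}}=\eta(s,t)$ and $\tau^M_{\{s,t\}}=1-\eta(s,t)$, giving
\[
(X^*_s, X^*_t) \stackrel{d}{=} \big(\max(\eta V_1,\,(1-\eta)V_3),\ \max(\eta V_2,\,(1-\eta)V_3)\big),
\]
with $V_1,V_2,V_3$ i.i.d.\ standard Fr\'echet and $\eta=\eta(s,t)$; one verifies this reproduces the bivariate law (\ref{modelbivariate}). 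Writing $W:=(1-\eta)V_3$ for the shared atom, both coordinates dominate $W$, and a short case distinction yields the pathwise bound $|X^*_s - X^*_t| \le \max\big((\eta V_1 - W)^+,(\eta V_2 - W)^+\big)$. A union bound with symmetry then gives $\PP(|X^*_s-X^*_t|>\varepsilon)\le 2\,\PP(\eta V_1 - W > \varepsilon)$, and since $W\ge 0$ this is at most $2\,\PP(\eta V_1>\varepsilon)=2(1-\exp(-\eta(s,t)/\varepsilon))$; the last inequality follows from $1-e^{-u}\le u$.

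Part~c) is then read off directly: if $t_n\to t$, then $\PP(|X^*_{t_n}-X^*_t|>\varepsilon)\le \tfrac{2}{\varepsilon}\eta(t_n,t)=\tfrac{2}{\varepsilon}(1-\chi(t_n,t))$, and joint continuity of $\chi$ together with $\chi(t,t)=1$ (equivalently $\eta(t,t)=0$) drives the right-hand side to $0$, which is exactly stochastic continuity of $X^*$. The main obstacle I anticipate is the pathwise bound in part~b): the decisive observation is that the common atom $W$ cancels in the difference, so that $|X^*_s-X^*_t|$ is controlled by the independent atoms alone, after which the reduction to a one-dimensional Fr\'echet tail probability is routine. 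A secondary technical point is the distributional-convergence step in part~a), which relies on the limit distribution function being continuous so that the relevant probabilities converge for every $x>0$.
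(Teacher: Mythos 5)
Your proof is correct, but in part b) it takes a genuinely different route from the paper's. The paper proves the tail bound purely analytically: it partitions (a subset of) the event $\{|X^*_s-X^*_t|\le 2\varepsilon\}$ into L-shaped events $A_k$, expresses $\PP(A_k)$ through the bivariate distribution function (\ref{modelbivariate}) via the quantities $B(p,q)=\PP(X^*_s\le p\varepsilon, X^*_t\le q\varepsilon)$, and evaluates a telescoping sum in the limit to reach $\PP(|X^*_s-X^*_t|>2\varepsilon)\le 2\bigl(1-\exp(-\eta(s,t)/(2\varepsilon))\bigr)$, which is your bound after renaming $2\varepsilon$ as $\varepsilon$. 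You instead exploit the max-linear structure directly: your coefficients $\tau^M_{\{s\}}=\tau^M_{\{t\}}=\eta(s,t)$ and $\tau^M_{\{s,t\}}=1-\eta(s,t)$ are exactly those produced by theorem \ref{ecf_CA}b) and do reproduce (\ref{modelbivariate}), your pathwise bound $|X^*_s-X^*_t|\le\max\bigl((\eta V_1-W)^+,(\eta V_2-W)^+\bigr)$ with the shared atom $W=(1-\eta)V_3$ checks out in all four cases, and the union bound plus the Fr\'echet tail $\PP(\eta V_1>\varepsilon)=1-e^{-\eta/\varepsilon}$ finishes in two lines; since the event depends only on the joint law of the pair, passing to the coupling is legitimate. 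Your argument is shorter, conceptually transparent (the common spectral atom cancels in the difference), and avoids both the $2\varepsilon$ bookkeeping and the limiting sum; the paper's computation needs only the bivariate c.d.f.\ and no explicit representation, though the two are equivalent in content because (\ref{modelbivariate}) is precisely the law of your coupling. In part a) you also deviate mildly: the paper deduces convergence in distribution of the full vector and invokes that $(x,\dots,x)$ is a continuity point of $F_{(t_1,\dots,t_m)}$ by monotonicity and homogeneity of $\log F$ (citing Resnick), whereas you push the maximum through the continuous mapping theorem for convergence in probability and use that the scalar limit $\max_i X_{t_i}$ has the continuous c.d.f.\ $\exp(-\theta(M)/x)$ — a clean simplification that sidesteps the multivariate continuity-point discussion. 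Part c) is the same in both proofs.
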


\begin{proof}
\begin{enumerate}[a)]
\item The second implication is obvious from $\chi(s,t)=2-\theta^{(2)}(s,t)$. As to the first implication: Stochastic continuity of $X$ means that for any $\varepsilon > 0$, for any $t \in T$ and sequence $t^{(n)} \rightarrow t$ we have $\PP(\lvert X_{t^{(n)}} - X_t \rvert > \varepsilon) \rightarrow 0$. From this, we can easily derive that for any $\varepsilon >0$ and any $(t_1,\dots,t_m) \in T^m$ and a sequence  $(t^{(n)}_1,\dots,t^{(n)}_m) \rightarrow (t_1,\dots,t_m)$, also $\PP(\lVert (Z_{t^{(n)}_i} - Z_{t_i})_{i=1}^m \rVert > \varepsilon) \rightarrow 0$ for any reference norm $\lVert \cdot \rVert$ on $\RR^m$. The latter implies the corresponding convergence in distribution: $F_{(t^{(n)}_1,\dots,t^{(n)}_m)} \rightarrow F_{(t_1,\dots,t_m)}$. Since $\log F_{(t_1, \dots, t_m)}: [0,\infty)^m \rightarrow \RR$ is monotone and homogeneous, we have that for $x>0$ the point $(x,\dots,x) \in (0,\infty)^m$ is a continuity point of  $F_{(t_1, \dots, t_m)}$ (cf. \cite{resnick_08}, p. 277). Thus, we may conclude that the induced function $\theta^{(m)}$ on $T^m$ is continuous, since $\theta^{(m)}(t_1,\dots,t_m)= - x \log F_{(t_1, \dots, t_m)} (x, \dots, x)$.
 
\item  Let $\varepsilon > 0$. We will prove the statement for $2 \varepsilon$ instead of $\varepsilon$. Therefore, consider the following disjoint events on a corresponding probability space $(\Omega, {\mathcal A}, \PP)$ for $k=0,1,2, \dots $
\begin{align*}
A_k := \left\{\omega  \in \Omega \pmid (X^*_s(\omega),X^*_t(\omega)) \in (k \varepsilon, (k+2) \varepsilon]^2 \setminus ((k+1)\varepsilon, (k+2)\varepsilon]^2 \right\} 
\end{align*}
The disjoint union $\bigcup_{k=0}^{\infty} A_k$ is a subset of $\{\omega \in \Omega \pmid |X^*_s(\omega) - X^*_t(\omega)| \leq 2\varepsilon\}$ and so 
\begin{align*}
\PP(|X^*_s - X^*_t| \leq 2\varepsilon) \geq \PP\left(\bigcup_{k=0}^{\infty} A_k\right) = \sum_{k=0}^\infty \PP(A_k) = \lim_{n \to \infty} \sum_{k=0}^n \PP(A_k).
\end{align*}
For further calculations we abbreviate for $p,q \in \NN \cup \{0\}$ 
\begin{align*}
B(p,q):=\PP(X^*_s \leq p \cdot \varepsilon, X^*_t \leq q \cdot \varepsilon).
\end{align*}
Note from (\ref{modelbivariate}) that $B(p,q)=B(q,p)$ and $B(p,0)=0$. With this notation we rearrange
\begin{align*}
\sum_{k=0}^n \PP(A_k) 
&= -B(n+1,n+1)   + 2  \sum_{k=0}^n \left[ B(k+2,k+1)-  B(k+2,k) \right]
\end{align*}
For the second summand we have (cf. (\ref{modelbivariate}))
\begin{align*}
&   \sum_{k=0}^n \left[ B(k+2,k+1)-  B(k+2,k) \right]\\
&\stackrel{(\ref{modelbivariate})}{=}    \sum_{k=0}^n \left[ \exp\left(-\frac{1}{\varepsilon}\left[\frac{\eta(s,t)}{k+2} + \frac{1}{k+1}\right]\right) - \exp\left(-\frac{1}{\varepsilon}\left[\frac{\eta(s,t)}{k+2} + \frac{1}{k}\right]\right)  \right]\\
&=    \sum_{k=0}^n \exp\left(-\frac{1}{\varepsilon}\left[\frac{\eta(s,t)}{k+2} \right]\right) \left[ \exp\left(-\frac{1}{(k+1)\varepsilon}\right) - \exp\left(-\frac{1}{k \varepsilon}\right)  \right]\\
& \geq    \sum_{k=0}^n \exp\left(-\frac{\eta(s,t)}{2 \varepsilon}\right) \left[ \exp\left(-\frac{1}{(k+1)\varepsilon}\right) - \exp\left(-\frac{1}{k \varepsilon}\right)  \right] \\
&=  \exp\left(-\frac{\eta(s,t)}{2 \varepsilon}\right) \exp\left(-\frac{1}{(n+1) \varepsilon}\right) 
\end{align*}
Finally, 
\begin{align*}
\PP(|X^*_s &- X^*_t| > 2\varepsilon)  = 1 - \PP(|X^*_s - X^*_t| \leq 2\varepsilon)  \leq 1 - \lim_{n \to \infty} \sum_{k=0}^n \PP(A_k)\\
&= 1 + \lim_{n \to \infty} B(n+1,n+1)  - 2  \lim_{n \to \infty} \sum_{k=0}^n \left[ B(k+2,k+1)-  B(k+2,k) \right] \\
&\leq 1 + \lim_{n \to \infty} \exp\left(-\frac{\eta(s,t)+1}{(n+1)\varepsilon}\right) - 2  \lim_{n \to \infty} \left(\exp\left(-\frac{\eta(s,t)}{2 \varepsilon}\right) \exp\left(-\frac{1}{(n+1) \varepsilon}\right)\right)\\
&= 2 - 2 \exp\left(-\frac{\eta(s,t)}{2 \varepsilon}\right) \leq \frac{2}{2\varepsilon} \eta(s,t)
\end{align*}
\item This follows from part \ref{cty_ineq}). \qedhere
\end{enumerate}
\end{proof}

The following corollaries are immediate consequences of theorem \ref{cty_chitheta}.

\begin{corollary}\label{chitheta_cty}
Let $\theta: \finite(T) \rightarrow [0,\infty)$ be the extremal coefficient function and $\chi:T\times T \rightarrow [0,1]$ the extremal correlation function of a max-stable random field on a metric space $T$ with standard Fr{\'e}chet marginals. Then
\begin{align*}
\theta \text{ is continuous.} \quad \Longleftrightarrow \quad \chi \text{ is continuous.} 
\end{align*}
\end{corollary}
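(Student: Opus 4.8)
The plan is to obtain both implications by chaining together the three parts of Theorem \ref{cty_chitheta}, so that essentially no new computation is required.

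For the direction \emph{$\theta$ continuous $\Rightarrow$ $\chi$ continuous} I would simply invoke the relation $\chi(s,t)=2-\theta(\{s,t\})$ from (\ref{sumistwo}). Since continuity of $\theta$ means by definition that every induced function $\theta^{(m)}:T^m\to\RR$ is continuous, in particular $\theta^{(2)}(s,t)=\theta(\{s,t\})$ is continuous, and hence so is $\chi=2-\theta^{(2)}$. This is precisely the second implication already recorded in part a) of Theorem \ref{cty_chitheta}.

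For the converse \emph{$\chi$ continuous $\Rightarrow$ $\theta$ continuous} the key point is that continuity of $\chi$ is only a statement about the bivariate margins, whereas continuity of $\theta$ involves all the induced functions $\theta^{(m)}$. I would bridge this gap through the canonical max-stable field $X^*$ of model (\ref{model}) that realizes $\theta$ as its own extremal coefficient function (and hence $\chi$ as its extremal correlation function), whose existence is guaranteed by Theorem \ref{ecf_CA}. Concretely: assuming $\chi$ continuous, part c) of Theorem \ref{cty_chitheta} yields that $X^*$ is stochastically continuous; then part a) of the same theorem, applied to the field $X^*$, gives that its extremal coefficient function -- which is exactly $\theta$ -- is continuous. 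Composing these two implications closes the equivalence.

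The only real subtlety to watch is that the detour through $X^*$ is genuinely needed: for an arbitrary field, bivariate continuity does not obviously force continuity of the higher-order coefficients. It is the explicit max-linear structure of $X^*$, whose full finite-dimensional law is pinned down by $\eta=1-\chi$ through model (\ref{model}), that lets the bivariate information propagate -- first upward to stochastic continuity via the quantitative estimate in part b) that underlies part c), and then back up to all of $\theta$ via part a). I expect no further obstacle, since both ingredients are already established in Theorem \ref{cty_chitheta}.
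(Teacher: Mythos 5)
Your proof is correct and is exactly the argument the paper intends: the forward direction is the second implication of Theorem \ref{cty_chitheta}~a) via $\chi=2-\theta^{(2)}$, and the converse chains part~c) (continuity of $\chi$ gives stochastic continuity of the canonical field $X^*$ from model (\ref{model}), which by Theorem \ref{ecf_CA} realizes $\theta$) with the first implication of part~a) applied to $X^*$. Your remark that the detour through $X^*$ is what lets the bivariate information control all higher-order coefficients $\theta^{(m)}$ is precisely the point of the paper's construction; nothing is missing.
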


\begin{corollary}
Let $T$ be a  metric space.
\begin{enumerate}[a)]
\item
The set of {\em continuous} extremal coefficient functions $\theta:\finite(T) \rightarrow [0,\infty)$ of max-stable random fields on $T$ coincides with the set of extremal coefficient functions $\theta:\finite(T) \rightarrow [0,\infty)$ of {\em stochastically continuous} max-stable random fields on $T$.
\item
The set of {\em continuous} extremal correlation functions $\chi: T \times T \rightarrow [0,1]$ of max-stable random fields on $T$ coincides with the set of extremal correlation functions $\chi: T \times T \rightarrow [0,1]$ of {\em stochastically continuous} max-stable random fields on $T$.
\end{enumerate}
\end{corollary}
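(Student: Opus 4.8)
The plan is to prove each of the two claimed set equalities by establishing the two inclusions separately, and in both cases the work is entirely a matter of chaining together the three implications of Theorem \ref{cty_chitheta} with the explicit construction of the field $X^*$ provided by Theorem \ref{ecf_CA} b). I would treat part a) first; part b) is then obtained by the same argument run through the correspondence $\chi(s,t) = 2 - \theta^{(2)}(s,t)$ (equivalently through Corollary \ref{chitheta_cty}).

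For the inclusion ``realized by a stochastically continuous field'' $\subseteq$ ``continuous'' in part a), let $\theta$ be the extremal coefficient function of \emph{some} stochastically continuous max-stable random field $X$. The first implication of Theorem \ref{cty_chitheta} a) immediately gives that $\theta$ is continuous, so $\theta$ already lies in the set of continuous extremal coefficient functions, witnessed by the very same $X$. This direction is therefore completely automatic and uses nothing beyond the stated implication.

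The reverse inclusion is the only part carrying content. Suppose $\theta$ is a continuous extremal coefficient function, that is, $\theta$ is continuous and arises from some max-stable random field which need \emph{not} be stochastically continuous. Since $\chi(s,t) = 2 - \theta^{(2)}(s,t)$, continuity of $\theta$ forces continuity of $\chi$. I would then feed $\theta$ into the construction of Theorem \ref{ecf_CA} b): this produces a concrete max-stable field $X^*$ with standard Fr\'echet marginals whose extremal coefficient function is exactly $\theta$, and hence whose extremal correlation function is exactly $\chi$. Applying Theorem \ref{cty_chitheta} c), the continuity of $\chi$ guarantees that $X^*$ is stochastically continuous. Thus $\theta$ is realized by a stochastically continuous field, namely $X^*$, which is precisely the required membership; combining the two inclusions proves part a).

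For part b) the structure is identical. The easy inclusion uses the composite implication in Theorem \ref{cty_chitheta} a) (stochastic continuity $\Rightarrow$ $\theta$ continuous $\Rightarrow$ $\chi$ continuous). For the converse, a continuous $\chi$ yields a continuous $\theta$ by Corollary \ref{chitheta_cty}, one constructs $X^*$ realizing this $\theta$ (and therefore $\chi$) via Theorem \ref{ecf_CA} b), and Theorem \ref{cty_chitheta} c) again makes $X^*$ stochastically continuous. The only genuinely substantive ingredient throughout is the stochastic continuity of the canonical model $X^*$ under continuity of $\chi$, which is Theorem \ref{cty_chitheta} c); once that is in hand the corollary is purely formal. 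I expect no real obstacle beyond carefully matching the existential quantifiers on the two sides: the left-hand sets quantify over arbitrary (possibly discontinuous) fields sharing the given invariant, while the right-hand sets quantify over stochastically continuous fields, and the passage from the former to the latter is exactly what the canonical model $X^*$ accomplishes.
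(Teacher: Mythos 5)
Your proof is correct and matches the paper's intent exactly: the paper states this corollary as an immediate consequence of Theorem \ref{cty_chitheta} without spelling out a proof, and your argument---the easy inclusion via implication a), the converse via the canonical model $X^*$ of Theorem \ref{ecf_CA} b) together with implication c)---is precisely that chaining. The detour through Corollary \ref{chitheta_cty} in part b) is harmless but not even needed, since Theorem \ref{cty_chitheta} c) applies directly once $\chi$ is continuous.
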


Finally, we will show that we are allowed to take pointwise limits on the set of extremal correlation functions of max-stable random fields as long as the limit function is continuous:

\begin{theorem}\label{corr_cvgce_to_cts}
Let $T$ be a separable metric space and let $\chi_n:T \times T \rightarrow [0,1]$ be the extremal correlation functions of some max-stable random fields on $T$ such that the functions $\chi_n$ converge pointwise to a continuous function $f: T \times T \rightarrow [0,1]$. Then $f$ is an extremal correlation function of a stochastically continuous, max-stable random field on $T$.
\end{theorem}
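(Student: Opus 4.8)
The plan is to transfer the problem to a countable dense subset, where Corollary~\ref{countable_corr_closed} already applies, and then to extend the resulting extremal coefficient function back to all of $T$ by exploiting a Lipschitz-type estimate whose modulus is governed by $\eta = 1-\chi$. Since $T$ is separable, fix a countable dense set $D \subseteq T$. The restrictions $\chi_n|_{D\times D}$ are extremal correlation functions of max-stable fields on the \emph{countable} space $D$ and converge pointwise to $f|_{D\times D}$. Running the compactness argument from the proof of Corollary~\ref{countable_corr_closed} on the associated extremal coefficient functions $\theta_n$ restricted to $\finite(D)$ -- each value lies in the compact interval $[1,|M|]$ and $\finite(D)$ is countable -- I extract a subsequence with $\theta_{n'} \to \theta^D$ pointwise on $\finite(D)$. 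By Corollary~\ref{ecf_closed}, $\theta^D$ is a genuine extremal coefficient function on $D$, with $\theta^D(\{s,t\}) = 2 - f(s,t)$ for $s,t\in D$. Writing $\theta^{D,(m)}\colon D^m\to\RR$ for its induced functions, the goal becomes to show these extend continuously to $T^m$.

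The key estimate is that for any completely alternating $\theta$ with $\theta(\emptyset)=0$, $\theta(\{t\})=1$, any finite $A$ and any points $s,t$,
\[
|\theta(A\cup\{s\}) - \theta(A\cup\{t\})| \le \theta(\{s,t\}) - 1 = \eta(s,t).
\]
I would read this off the representation~(\ref{thetaA.from.tauML}): on a finite $M\supseteq A\cup\{s,t\}$ one has $\theta(B)=\sum_{L\subseteq M:\,L\cap B\neq\emptyset}\tau^M_L =: \mu_M(\{L: L\cap B\neq\emptyset\})$, so the left-hand difference equals $\mu_M$ of the event $\{L: s\in L,\ t\notin L,\ L\cap A=\emptyset\}$, which is contained in $\{L: s\in L,\ t\notin L\}$ of total mass $\theta(\{s,t\})-\theta(\{t\})=\eta(s,t)$; the reverse inclusion handles the absolute value. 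Telescoping coordinate by coordinate then yields, for $s_i,t_i\in D$,
\[
|\theta^{D,(m)}(s_1,\dots,s_m) - \theta^{D,(m)}(t_1,\dots,t_m)| \le \sum_{i=1}^m \big(1 - f(s_i,t_i)\big).
\]

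Since $f$ is jointly continuous with $f(t,t)=1$, the right-hand side tends to $0$ as $s_i\to t_i$; hence along any $D$-valued sequence approaching a point of $T^m$ the values $\theta^{D,(m)}$ are Cauchy with a limit independent of the sequence, and one defines $\theta^{(m)}$ by these limits (and $\theta(\emptyset)=0$). Its bivariate part is $\theta^{(2)}(s,t)=\lim_k \theta^{D,(2)}(d_k,e_k)=2-f(s,t)$ for $d_k\to s,\ e_k\to t$ in $D$, again by continuity of $f$. One then checks that symmetry~(\ref{fsymmetry}) and consistency~(\ref{fconsistency}) pass to the limit -- for coinciding coordinates using a common approximating sequence -- so that $\theta$ is well defined on $\finite(T)$, and each complete-alternation inequality follows by approximating all the points involved by $D$-sequences and passing to the limit. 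By Theorem~\ref{ecf_CA} the resulting $\theta$ is the extremal coefficient function of a max-stable field $X^*$ built from model~(\ref{model}), and its extremal correlation function is $2-\theta^{(2)}=f$; as $f$ is continuous, part c) of Theorem~\ref{cty_chitheta} shows $X^*$ is stochastically continuous. The substantive steps are the Lipschitz estimate and the proof that complete alternation survives the density extension; the fiddliest bookkeeping is verifying the consistency relation~(\ref{fconsistency}) in the limit at points with repeated coordinates.
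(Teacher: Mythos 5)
Your proof is correct, and its outer skeleton is the same as the paper's (pass to a countable dense set, invoke Corollary~\ref{countable_corr_closed} to get a limiting extremal coefficient function $\theta^D$ there, extend the induced functions $\theta^{D,(m)}$ continuously to $T^m$, re-verify the conditions of Theorem~\ref{ecf_CA}, and finish with Theorem~\ref{cty_chitheta}~c)). Where you genuinely diverge is the key technical step, the Cauchy continuity of $\theta^{D,(m)}$ on $D^m$. The paper proves this probabilistically, in three steps: it first uses the triangle inequality for $\eta$ (Corollary~\ref{eta_inequality}) to control $\sum_i \eta(q_i,\hat q_i)$ near a point of $T^m$, then the stochastic-continuity bound $\PP(|X^*_s-X^*_t|>\varepsilon)\leq \frac{2}{\varepsilon}\eta(s,t)$ from Theorem~\ref{cty_chitheta}~b) to compare the distribution functions $\tilde F_q(x)$ and $\tilde F_{\hat q}(x+a)$, and finally an analytic argument (mean value theorem, evaluating $\tilde\theta^{(m)}$ at the two levels $x=1$ and $x=1+a$) with a delicate choice of $a$ and $\varepsilon_1$. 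You replace all of this by the deterministic Lipschitz estimate
\[
\bigl|\theta(A\cup\{s\})-\theta(A\cup\{t\})\bigr|\;\leq\;\theta(\{s,t\})-1=\eta(s,t),
\]
which you derive from the discrete spectral representation~(\ref{thetaA.from.tauML}); note it also follows in two lines from complete alternation alone, since monotonicity ($\Delta_K\theta\leq 0$) gives $\theta(A\cup\{s\})-\theta(A\cup\{t\})\leq\theta(A\cup\{s,t\})-\theta(A\cup\{t\})$ and the second-order inequality $\Delta_{\{s\}}\Delta_{A}\theta(\{t\})\leq 0$ bounds the latter by $\theta(\{s,t\})-\theta(\{t\})$ --- this avoids even the appeal to nonnegativity of the $\tau^M_L$, and your set-difference argument correctly covers the case $s\in A$ needed when telescoping through tuples with coinciding entries. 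Telescoping yields the explicit modulus $|\theta^{D,(m)}(s)-\theta^{D,(m)}(t)|\leq\sum_{i=1}^m(1-f(s_i,t_i))$, which is stronger and considerably cleaner than the paper's qualitative ``Property A/B'', and it also makes Corollary~\ref{eta_inequality} unnecessary at this stage: joint continuity of $f$ with $f(t,t)=1$ (inherited pointwise from $\chi_n(t,t)=1$) suffices, with no triangle inequality for $\eta$. Your concluding bookkeeping is also sound: uniqueness of limits along $D$-sequences makes the extension well defined, consistency~(\ref{fconsistency}) at repeated coordinates follows by using a common approximating sequence, and complete alternation survives the limit because each alternating-sum inequality involves finitely many points and holds for $\theta^D$ on arbitrary finite subsets of $D$ (so accidental collapsing of approximants is harmless); Theorem~\ref{cty_chitheta}~b)/c) is then still needed, exactly as in the paper, only at the very end to get stochastic continuity of the model-(\ref{model}) field realizing $\theta$. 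In short: same architecture, but a more elementary and quantitatively sharper core lemma than the paper's probabilistic estimate.
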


\begin{remark}
However, we are not aware of whether it is still true, that  the limit function $f$ is an extremal correlation function of a max-stable random field on $T$ if we drop the requirement of $f$ being continuous.
\end{remark}

\begin{proof}[Proof of theorem \ref{corr_cvgce_to_cts}]  Let $Q \subset T$ be a countable dense set; then $Q^m \subset T^m$ is also countable and dense for any $m \geq 0$.
Since $\chi_n$ converge pointwise to $f$, the restrictions $\left.\chi_n\right|_{Q \times Q}$ also converge pointwise to $\left.f\right|_{Q \times Q}$. Because of corollary \ref{countable_corr_closed} and the arguments in the corresponding proof, the restricted function $\left.f\right|_{Q \times Q}$ is an extremal correlation function $\tilde{\chi}$ of a max-stable random field $\tilde{X}$ on $Q$, which is a random field of type (\ref{model}). Let us denote its extremal coefficient function by $\tilde{\theta}$. 

The idea of this proof is now to show that  the induced functions $\tilde{\theta}^{(m)} : Q^m \rightarrow [1,m]$ can be uniquely extended to continuous functions on $T^m$. For $m=0$ and $m=1$ this is trivial, since $\tilde{\theta}^{(0)} \equiv 0$ and $\tilde{\theta}^{(1)} \equiv 1$ are constant by definition.
Secondly, since $f$ is continuous on $T \times T$ the function $2-f:T\times T \rightarrow [1,2]$ is the unique continuous extension of $\tilde{\theta}^{(2)}=\left.2-f\right|_{Q \times Q}$ to $T \times T$.
 
Moreover, set $\eta=1-f$. Then $\eta$ is also continuous with $\eta(t,t)=0$ for all $t \in T$. Since $\eta(s,t)$ is the pointwise limit of functions which are symmetric and satisfy the triangle inequality (cf. corollary \ref{eta_inequality}), the function $\eta$ itself is symmetric and satisfies the triangle inequality $\eta(s,t)\leq \eta (s,r)+\eta(r,t)$ for all $s,t,r \in T$.

Now, fix $m \geq 3$ and consider $\tilde{\theta}^{(m)} : Q^m \rightarrow [1,m]$. Note that even though we can derive continuity of $\tilde{\theta}^{(m)}$ from corollary \ref{chitheta_cty}, we may not derive a continuous extension directly. 
But the following property of $\tilde{\theta}^{(m)}$ will be enough: Let $B_\delta(t)\subset T^m$ denote  the open $\delta$-ball around $t \in T^m$. \smallskip\\
{\bf Property A} For $t=(t_1,\dots,t_m) \in T^m$ and $\varepsilon > 0$ we can find $\delta>0$ such that for any $q, \hat{q} \in B_\delta(t) \cap Q^m$ we have $|\tilde{\theta}^{(m)}(\hat{q})-\tilde{\theta}^{(m)}(q)|\leq \varepsilon$.
\smallskip\\
This means, that $\tilde{\theta}^{(m)}:Q^m \subset T^m \rightarrow [1,m]$ is Cauchy continuous. In fact, because of the symmetry, it suffices to establish the following property:
\smallskip\\
{\bf Property B} For $t=(t_1,\dots,t_m) \in T^m$ and $\varepsilon > 0$ we can find $\delta>0$ such that for any $q, \hat{q} \in B_\delta(t) \cap Q^m$ we have $\tilde{\theta}^{(m)}(\hat{q})-\tilde{\theta}^{(m)}(q) \leq \varepsilon$.
\smallskip\\
Interchanging the roles of $q$ and $\hat{q}$ in property B yields property A with the smaller $\delta$ from property B. Now, we prove property B in three steps:

\paragraph{1st step} For all $\varepsilon_1>0$ there is $\delta_1>0$, such that for all $q, \hat{q} \in B_{\delta_1}(t) \cap Q^m$ we have $2 m \sum_{i=1}^m \eta(q_i,\hat{q}_i) < \varepsilon_1$.\medskip\\
{\em Proof of 1st step:} Since $s \mapsto \eta(t_i,s)$ is continuous at $t_i\in T$ for $i=1,\dots,m$, there exist $\delta_1^{(i)}>0$ such that $\eta(t_i,q_i) < \frac{\varepsilon_1}{4m^2}$ for $q_i \in B_{\delta^{(i)}_1}(t_i)$. Choose $\delta_1>0$ such that $B_{\delta_1}(t) \subset \prod_{i=1}^m B_{\delta^{(i)}_1}(t_i)$. Let $q,\hat{q} \in B_{\delta_1}(t)$. Then the triangle inequality of $\eta$ yields:
\[2 m \sum_{i=1}^m \eta(q_i,\hat{q}_i) \leq  2 m \sum_{i=1}^m \left[\eta(q_i,t_i) + \eta(t_i,\hat{q}_i)\right] < 2m \sum_{i=1}^m 2 \frac{\varepsilon_1}{4m^2} = \varepsilon_1
\]

\paragraph{2nd step} For $q \in Q^m$ denote $\tilde{F}_q(x):=\PP(\tilde{X}_{q_1} \leq x, \dots, \tilde{X}_{q_m} \leq x)$. For all $\varepsilon_1>0$ there is $\delta_1>0$, such that for all $q, \hat{q} \in B_{\delta_1}(t) \cap Q^m$ and for all $a,x>0$ we have $ \tilde{F}_q(x) < \tilde{F}_{\hat{q}}(x+a) + \frac{\varepsilon_1}{a}$.\medskip\\
{\em Proof of 2nd step:}
A priori we have for any $a,x>0$ and all $q,\hat{q} \in Q^m$
\begin{align*}
\tilde{F}_q(x) &= \PP(\tilde{X}_{q_1} \leq x, \dots, \tilde{X}_{q_m} \leq x, \sum_{i=1}^m|\tilde{X}_{q_i} - \tilde{X}_{\hat{q}_i}| \leq a) \\
& \qquad + \PP(\tilde{X}_{q_1} \leq x, \dots, \tilde{X}_{q_m} \leq x, \sum_{i=1}^m|\tilde{X}_{q_i} - \tilde{X}_{\hat{q}_i}| > a) \\
&= \PP(\tilde{X}_{q_1} \leq x, \dots, \tilde{X}_{q_m} \leq x, |\tilde{X}_{q_1} - \tilde{X}_{\hat{q}_1}| \leq a, \dots, |\tilde{X}_{q_m} - \tilde{X}_{\hat{q}_m}| \leq a) \\
& \qquad + \PP\left(|\tilde{X}_{q_1} - \tilde{X}_{\hat{q}_1}| > \frac{a}{m} \text{ or } \dots \text{ or } |\tilde{X}_{q_m} - \tilde{X}_{\hat{q}_m}| > \frac{a}{m} \right) \\
&\leq \tilde{F}_{\hat{q}}(x+a) + \sum_{i=1}^m \PP\left(|\tilde{X}_{q_i} - \tilde{X}_{\hat{q}_i}| > \frac{a}{m}\right) 
\leq  \tilde{F}_{\hat{q}}(x+a) + \frac{2m}{a} \sum_{i=1}^m \eta(q_i,\hat{q}_i)
\end{align*}
The last inequality follows from  theorem \ref{cty_chitheta} part \ref{cty_ineq}). Now, the 1st step gives a suitable choice for $\delta_1$ for given $\varepsilon_1$.

\paragraph{3rd step}  {\em Proof of property B:} \medskip\\
We may assume $\varepsilon < 2$. Choose $a > 0$ such that $2am<\varepsilon$. In particular this implies $e^{-m}\geq e^{-\frac{\varepsilon}{2a}}$ and we may choose $\varepsilon_1$ such that on the one hand $\varepsilon_1 < a (e^{-m} -  e^{-\frac{\varepsilon}{2a}})$ and on the other hand $\varepsilon_1 < ae^{-m}/(\frac{2}{\varepsilon}-1)$. This ensures:
\begin{align}\label{epshalf}
a \left(- \log \left(e^{-m}-\frac{\varepsilon_1}{a}\right)\right) \leq \frac{\varepsilon}{2} 
\qquad \text{ and } \qquad
\frac{1}{e^{-m}-\frac{\varepsilon_1}{a}} \cdot \frac{\varepsilon_1}{a} \leq \frac{\varepsilon}{2}
\end{align}
Finally, choose $\delta:=\delta_1$ as in the 2nd step (or equivalently 1st step) for this $\varepsilon_1$. Then we have for arbitrary $q,\hat{q} \in B_{\delta}(t)$ that
\begin{align*}
\tilde{\theta}^{(m)}(\hat{q}) - \tilde{\theta}^{(m)}({q}) 
&=  (1+a)\left(-\log \tilde{F}_{\hat{q}}(1+a)\right) + \log \tilde{F}_q(1) \\
&\leq  (1+a)\left(-\log  \left(\tilde{F}_{q}(1) - \frac{\varepsilon_1}{a}\right)\right) + \log \tilde{F}_q(1) \\
&=  \left(\log \tilde{F}_q(1) - \log  \left(\tilde{F}_{q}(1) - \frac{\varepsilon_1}{a}\right)\right) 
+ a \left(-\log  \left(\tilde{F}_{q}(1) - \frac{\varepsilon_1}{a}\right)\right).
\end{align*}
Here, the first equality is just the definition of $\tilde{\theta}^{(m)}$ and the second inequality due to the choice of $\delta$ (cf. 2nd step). Now, we apply the mean value theorem to achieve 
\begin{align*}
\tilde{\theta}^{(m)}(\hat{q}) - \tilde{\theta}^{(m)}({q}) 
&=  \max_{\xi \in [\tilde{F}_{q}(1) -  \frac{\varepsilon_1}{a} , \tilde{F}_{q}(1)]} \frac{1}{\xi} \cdot \frac{\varepsilon_1}{a}
+ a \left(-\log  \left(\tilde{F}_{q}(1) - \frac{\varepsilon_1}{a}\right)\right)\\
&= \frac{1}{\tilde{F}_{q}(1) - \frac{\varepsilon_1}{a}} \cdot \frac{\varepsilon_1}{a}
+ a \left(-\log  \left(\tilde{F}_{q}(1) - \frac{\varepsilon_1}{a}\right)\right).
\end{align*}
Using $e^{-m} \leq \tilde{F}_{q}(1)$ and (\ref{epshalf}), we arrive at
\begin{align*}
\tilde{\theta}^{(m)}(\hat{q}) - \tilde{\theta}^{(m)}({q}) 
&= \frac{1}{e^{-m} - \frac{\varepsilon_1}{a}} \cdot \frac{\varepsilon_1}{a}
+ a \left(-\log  \left(e^{-m} - \frac{\varepsilon_1}{a}\right)\right)  \leq \frac{\varepsilon}{2} + \frac{\varepsilon}{2}.
\end{align*}
This proves property B.

Thus, for each $m \geq 0$ we may uniquely extend $\tilde{\theta}^{(m)}$ to a continuous function $\theta^{(m)}: T^m \rightarrow [1,m]$. 
Naturally, this extension $\{\theta^{(m)}\}_{m \geq 0}$ inherits the properties  (\ref{fsymmetry}) and (\ref{fconsistency}) from $\{\tilde{\theta}^{(m)}\}_{m \geq 0}$.  Thus, it defines a function $\theta$ on $\finite(T)$ via (\ref{fcorrespondency}). By construction, this extension respects the three properties of theorem \ref{ecf_CA}, i.e. $\theta$ defines indeed the extremal coefficient function of a max-stable random field $X^*$ on $T$ as constructed in theorem \ref{ecf_CA}.  As mentioned above, $f=\chi$ is the corresponding extremal correlation function $\chi$ of this random field. Because of theorem \ref{cty_chitheta}, the random field $X^*$ is stochastically continuous. 
\end{proof}

\section{Inclusions of convex sets of functions}\label{inclusions}

\subsection{Extremal coefficient functions}

We consider the following sets of functions on $\finite(T)$ in this section:
\begin{align*}
  \maxecf(T) &:= \left\{ M \mapsto \theta(M)=
  \frac{ \log \PP(\max_{t \in M} X_t \leq x)}{ \log \PP(X_t \leq x)}
  \pmid \begin{array}{l} \text{$X$ a {\em max-stable} r.f. on $T$} \\ \text{with std. Fr{\'e}chet marginals} \end{array} \right\}\\ \\ 
  \limecf(T) &:= \left\{ M \mapsto \lim_{x \uparrow \infty} 
  \frac{  \log \PP(\max_{t \in M} Z_t \leq x)}{ \log \PP(Z_t \leq x)}
  \pmid \begin{array}{l} \text{$Z$ a r.f. on $T$ with the same} \\ \text{one-dimensional marginals}  \\ \text{and upper endpoint $\infty$,} \\ \text{{\em such that the limits exist}} \end{array} \right\}\\ \\ 
  \binecf(T) &:= \left\{ M \mapsto 
  \frac{  \PP(\max_{t \in M} Y_t = 1)}{ \PP(Y_t = 1)}
  \pmid \begin{array}{l} \text{$Y$ a r.f. on $T$ with the same} \\ \text{one-dimensional marginals}  \\ \text{{\em taking values in $\{0,1\}$,}} \\ \text{such that $\EE Y_t\neq 0$} \end{array} \right\}
  \end{align*}

In particular, we denote $\maxecf(T)$ the set of extremal coefficient functions of max-stable random fields on $T$, which is already characterized by theorem \ref{ecf_CA} as the set of completely alternating functions $\theta$ on $\finite(T)$ with $\theta(\emptyset)=0$ and $\theta(\{t\})=1$ for $t \in T$. 

\begin{proposition}\label{ecf_inclusions}
The following inclusions hold:
\begin{enumerate}[a)]
\item $\cl(\binecf(T)) = \maxecf(T) = \limecf(T)$
\item $\binecf(T) = \{ \, \theta \in \maxecf(T) \pmid \theta \text{ bounded } \}$
\end{enumerate}
where the closure is meant with respect to pointwise convergence.
\end{proposition}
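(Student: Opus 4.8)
The plan is to establish part b) first, since it is the engine that drives part a). For part b) the key observation is that $\binecf(T)$ is precisely the set of normalized capacity functionals: a binary field $Y$ with common marginal $p := \PP(Y_t = 1) \neq 0$ has capacity functional $C(M) = \PP(\max_{t \in M} Y_t = 1)$, and the map $M \mapsto C(M)/p$ is exactly what defines an element of $\binecf(T)$. By Theorem \ref{cap_CA}, $C$ is completely alternating with $C(\emptyset) = 0$ and $C(\{t\}) = p$, and dividing by $p$ preserves complete alternation and produces the normalization required by Theorem \ref{ecf_CA}; since $C \leq 1$, the quotient is bounded by $1/p$. This gives $\binecf(T) \subset \{\theta \in \maxecf(T) : \theta \text{ bounded}\}$. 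For the reverse inclusion, given a bounded $\theta \in \maxecf(T)$ with $K := \sup_M \theta(M) < \infty$, I would set $p := 1/K$ and verify that $C := p\,\theta$ satisfies all four conditions of Theorem \ref{cap_CA} (values in $[0,1]$ because $0 \leq \theta \leq K$, $C(\emptyset)=0$, $C(\{t\})=p$ constant, and complete alternation by scaling). Theorem \ref{cap_CA} then furnishes a binary field realizing $C$, whence $\theta = C/p \in \binecf(T)$.

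For the equality $\cl(\binecf(T)) = \maxecf(T)$ in part a), one inclusion is immediate: by part b) we have $\binecf(T) \subset \maxecf(T)$, and $\maxecf(T)$ is closed under pointwise convergence by Corollary \ref{ecf_closed}, so $\cl(\binecf(T)) \subset \maxecf(T)$. The reverse inclusion is where I would invoke the operation of Bernstein functions (Corollary \ref{theta_ops}). For each $\lambda > 0$ the function $g_\lambda(r) = 1 - e^{-\lambda r}$ is a non-constant Bernstein function, so
\begin{align*}
\theta_\lambda(M) := \frac{1 - e^{-\lambda \theta(M)}}{1 - e^{-\lambda}}
\end{align*}
again lies in $\maxecf(T)$, and since $1 - e^{-\lambda\theta(M)} \leq 1$ it is bounded by $1/(1 - e^{-\lambda})$, hence lies in $\binecf(T)$ by part b). Letting $\lambda \downarrow 0$ and using $1 - e^{-\lambda\theta(M)} \sim \lambda\,\theta(M)$ shows $\theta_\lambda(M) \to \theta(M)$ pointwise, so every $\theta \in \maxecf(T)$ is a pointwise limit of elements of $\binecf(T)$.

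For $\maxecf(T) = \limecf(T)$, the inclusion $\maxecf(T) \subset \limecf(T)$ is trivial because a max-stable field is itself an admissible $Z$, for which the quotient $\log\PP(\max_{t\in M} Z_t \leq x)/\log\PP(Z_t \leq x)$ is constant in $x$ and equals $\theta(M)$ (cf. (\ref{theta_intro})). For the converse, given an admissible $Z$ whose limit defines $\theta$, I would introduce for each threshold $x$ the binary field $Y^{(x)}_t := \Eins_{Z_t > x}$. Its capacity functional $C^{(x)}(M) = 1 - \PP(\max_{t\in M} Z_t \leq x)$ is completely alternating by Theorem \ref{cap_CA}, and $C^{(x)}(\{t\}) = 1 - \PP(Z_t \leq x)$ is independent of $t$ and tends to $0$ as $x \uparrow \infty$ (upper endpoint $\infty$). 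The crucial point is that $\log\PP(\max_{t\in M} Z_t \leq x) = \log(1 - C^{(x)}(M)) \sim -C^{(x)}(M)$, and likewise for the denominator, so the normalized linear quotients $C^{(x)}(M)/C^{(x)}(\{t\})$ share the same limit $\theta(M)$ as the defining log-quotient. Since complete alternation is preserved under scaling and pointwise limits, $\theta$ is completely alternating; together with the boundary values $\theta(\emptyset)=0$ and $\theta(\{t\})=1$, Theorem \ref{ecf_CA} yields $\theta \in \maxecf(T)$.

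I expect the main obstacle to be the converse inclusion $\limecf(T) \subset \maxecf(T)$, specifically the justification that the limit of the log-quotient coincides with the limit of the linear capacity-quotient. One must argue that the existence of the log-limit transfers to the ratio $C^{(x)}(M)/C^{(x)}(\{t\})$ even though numerator and denominator both vanish; I would handle this by writing $\log(1-u) = -u\,h(u)$ with $h(u) \to 1$ as $u \to 0$ and factoring the quotient so that the correction factors $h$ cancel in the limit. The Bernstein-function approximation in the density step is the other potentially delicate point, but it reduces to a one-line asymptotic once Corollary \ref{theta_ops} has been applied.
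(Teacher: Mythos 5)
Your proof is correct, and parts of it coincide with the paper's: part b) (rescaling a bounded $\theta$ by $1/K$ and invoking Theorem \ref{cap_CA}, respectively bounding a normalized capacity by $1/p$) is essentially verbatim the paper's argument, as is the inclusion $\cl(\binecf(T)) \subset \maxecf(T)$ via Corollary \ref{ecf_closed}. Where you genuinely diverge is in the overall decomposition of part a). The paper never proves $\maxecf(T) \subset \cl(\binecf(T))$ head-on; it proves three one-way inclusions --- $\binecf(T) \subset \maxecf(T)$ by Theorems \ref{cap_CA} and \ref{ecf_CA}, $\limecf(T) \subset \cl(\binecf(T))$ by thresholding $Y_t = \Eins_{Z_t \geq x}$ together with the $-\log u \sim 1-u$ asymptotics, and the trivial $\maxecf(T) \subset \limecf(T)$ --- and then closes the cycle $\maxecf(T) \subset \limecf(T) \subset \cl(\binecf(T)) \subset \cl(\maxecf(T)) = \maxecf(T)$, so all equalities drop out simultaneously and Corollary \ref{theta_ops} is never needed. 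You instead attack each equality directly, and for the density of $\binecf(T)$ in $\maxecf(T)$ you introduce the Bernstein family $g_\lambda(r) = 1-e^{-\lambda r}$, obtaining the bounded extremal coefficient functions $\theta_\lambda = (1-e^{-\lambda\theta})/(1-e^{-\lambda}) \in \binecf(T)$ (by your part b)) with $\theta_\lambda \to \theta$ pointwise as $\lambda \downarrow 0$; this step is valid and has the merit of being explicitly constructive --- it exhibits a concrete one-parameter family of bounded approximants, and it decouples the density statement from the analysis of $\limecf(T)$ entirely. Your treatment of $\limecf(T) \subset \maxecf(T)$ (factoring $\log(1-u) = -u\,h(u)$ with $h \to 1$, then using preservation of complete alternation under scaling and pointwise limits to apply Theorem \ref{ecf_CA} directly) is in substance the paper's thresholding step merged with its $\binecf(T) \subset \maxecf(T)$ step, and your justification of the log-versus-linear quotient equivalence --- the point the paper dismisses as ``an easy exercise'' --- is sound, using only $C^{(x)}(M) \leq \sum_{t \in M} C^{(x)}(\{t\}) \to 0$ and the positivity of $\PP(Z_t \geq x)$ from the infinite upper endpoint. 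The trade-off: the paper's cyclic chain is more economical (no Bernstein functions, fewer direct verifications), while your route localizes each inclusion to a self-contained argument and would survive even if one only wanted the single statement $\maxecf(T) = \cl(\binecf(T))$ without discussing $\limecf(T)$ at all.
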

\begin{proof}
\begin{enumerate}[a)]
\item \label{ecf_inclusions_proof_of_a}
Firstly, let $F \in \binecf(T)$. Then  $F$ is a normalized capacity functional (cf. (\ref{cap})), such that $F(\emptyset)=0$ and $F(\{t\})=1$. Therefore, $F$ is completely  alternating (cf. theorem \ref{cap_CA}) and satisfies all conditions of theorem \ref{ecf_CA}, i.e. we have $F \in \maxecf(T)$. This shows
\begin{align}\label{bininmax}
\binecf(T) \subset \maxecf(T)
\end{align}
Secondly, let $F \in \limecf(T)$ for some random field $Z$. Similarly to (\ref{theta_lHopital}) it is an easy exercise to show that
  \begin{align*}
    F(M) &= \lim_{x \uparrow  \infty} \frac{1-\PP\left(\max_{t \in M} Z_t \leq x \right)}{1-\PP\left(Z_t \leq x \right)}
    = \lim_{x \uparrow  \infty}
    \frac{\PP\left(\max_{t \in M}\Eins_{Z_t \geq x} = 1 \right)}{\PP\left(\Eins_{Z_t \geq x} = 1\right)}
     \end{align*} 
Therefore, $F \in \cl(\binecf(T))$. This shows
\begin{align}\label{liminclbin}
\limecf(T) \subset \cl(\binecf(T))
\end{align}
Additionally, we have by definition that {$\maxecf(T) \subset \limecf(T)$} and $\maxecf(T)$ is closed (cf. corollary \ref{ecf_closed}). So, all inclusions follow from
\begin{align*}
 \maxecf(T) \subset \limecf(T) \stackrel{(\ref{liminclbin})}{\subset} \cl(\binecf(T)) \stackrel{(\ref{bininmax})}{\subset} \cl(\maxecf(T)) = \maxecf(T).
\end{align*}
\item Let $F \in \binecf(T)$ and $Y$ a corresponding random field. Then $F$ is bounded by $\frac{1}{\EE Y_t}$. Let $\theta \in \maxecf(T)$ be bounded, say by $K$. Clearly, $K \geq \theta(\{t\}) = 1$.  Set $C(A):=\theta(A)/K$. Then $C$ satisfies all requirements of theorem \ref{cap_CA} and defines a  binary field $Y$ as in the definition of $\binecf(T)$, such that $\PP(\max_{t \in M} Y_t = 1) / \PP(Y_t=1)=\theta(M)$. \qedhere 
\end{enumerate}
\end{proof}

In fact, all involved sets are convex:

\begin{proposition}\label{ecf_convexity}
The sets $\binecf(T)$ and $\maxecf(T)=\limecf(T)$ are convex.
\end{proposition}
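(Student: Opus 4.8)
The plan is to reduce both assertions to the characterization of $\maxecf(T)$ furnished by theorem \ref{ecf_CA}. Since proposition \ref{ecf_inclusions} already establishes $\maxecf(T)=\limecf(T)$, it suffices to prove convexity of $\maxecf(T)$ and of $\binecf(T)$ separately. For $\maxecf(T)$ I would exploit that, by theorem \ref{ecf_CA}, membership of a function $\theta:\finite(T)\rightarrow \RR$ is equivalent to three conditions: $\theta(\emptyset)=0$, $\theta(\{t\})=1$ for all $t\in T$, and complete alternation. The first two are affine constraints, and the decisive observation is that complete alternation is itself a system of \emph{linear} inequalities.

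Concretely, for fixed $n\geq 1$, sets $K_1,\dots,K_n$ and $K\in\finite(T)$, the quantity $(\Delta_{K_1}\cdots\Delta_{K_n}\theta)(K)=\sum_{I\subset\{1,\dots,n\}}(-1)^{|I|}\theta(K\cup\bigcup_{i\in I}K_i)$ depends linearly on $\theta$, so the requirement $(\Delta_{K_1}\cdots\Delta_{K_n}\theta)(K)\leq 0$ defines a half-space in the vector space $\RR^{\finite(T)}$. Hence $\maxecf(T)$ is the intersection of all these half-spaces with the affine subspace cut out by the normalizations $\theta(\emptyset)=0$ and $\theta(\{t\})=1$, and such an intersection is automatically convex. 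Spelled out directly: if $\theta_1,\theta_2\in\maxecf(T)$ and $\lambda\in[0,1]$, then $\theta:=\lambda\theta_1+(1-\lambda)\theta_2$ obviously satisfies both normalizations, and by linearity of the difference operators $(\Delta_{K_1}\cdots\Delta_{K_n}\theta)(K)=\lambda\,(\Delta_{K_1}\cdots\Delta_{K_n}\theta_1)(K)+(1-\lambda)(\Delta_{K_1}\cdots\Delta_{K_n}\theta_2)(K)\leq 0$ as a nonnegative combination of nonpositive numbers; thus $\theta\in\maxecf(T)$.

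For $\binecf(T)$ I would invoke proposition \ref{ecf_inclusions} b), which identifies $\binecf(T)$ with the bounded elements of $\maxecf(T)$. Given bounded $\theta_1,\theta_2\in\maxecf(T)$ with bounds $K_1,K_2$ and $\lambda\in[0,1]$, the convex combination $\lambda\theta_1+(1-\lambda)\theta_2$ lies in $\maxecf(T)$ by the previous paragraph and is bounded by $\lambda K_1+(1-\lambda)K_2$; hence it is again bounded and therefore lies in $\binecf(T)$, which is consequently convex.

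There is no genuine obstacle here: the whole argument rests on the single fact that the difference operators $\Delta_K$ are linear, so that complete alternation is preserved under nonnegative, in particular convex, combinations, while the normalizations are affine and boundedness is preserved under convex combinations. The only care required is to treat complete alternation as a family of linear inequalities on $\RR^{\finite(T)}$, rather than attempting to re-derive it from an underlying probabilistic model.
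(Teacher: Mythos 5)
Your proof is correct, but it takes a different route from the paper's. You argue abstractly: theorem \ref{ecf_CA} exhibits $\maxecf(T)$ as the solution set of the affine normalizations $\theta(\emptyset)=0$, $\theta(\{t\})=1$ together with the family of linear inequalities $(\Delta_{K_1}\cdots\Delta_{K_n}\theta)(K)\leq 0$, hence as an intersection of half-spaces with an affine subspace in $\RR^{\finite(T)}$, which is convex; and you handle $\binecf(T)$ by citing proposition \ref{ecf_inclusions}~b) (the bounded elements of $\maxecf(T)$) plus the observation that boundedness survives convex combination. The paper instead gives probabilistic constructions: for $\maxecf(T)$ it takes independent max-stable fields $X_1,X_2$ with standard Fr\'echet marginals and checks that $X=\max(\alpha X_1,(1-\alpha)X_2)$ has extremal coefficient function $\alpha\theta_1+(1-\alpha)\theta_2$; for $\binecf(T)$ it mixes two binary fields via an independent $U\sim\Bin(1,\alpha)$, computes the capacity of $UY_F+(1-U)Y_G$ as a convex combination with weights $\alpha p_F/(\alpha p_F+(1-\alpha)p_G)$ and its complement, and reparametrizes $\alpha$ to hit any prescribed weight $\beta$. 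The paper is explicitly aware of your route --- the remark immediately after its proof states that convexity ``follows directly from theorem \ref{ecf_CA}'' or from $\maxecf(T)$ being the closure of the convex $\binecf(T)$, but that the constructive argument was preferred. So the trade-off is: your argument is shorter and makes the structural reason for convexity transparent, while the paper's exhibits explicit random fields realizing each convex combination (and, for $\binecf(T)$, does so without passing through proposition \ref{ecf_inclusions}~b)). One small point worth noting in your write-up: your reliance on proposition \ref{ecf_inclusions} is not circular, since that proposition precedes this one and its proof uses only theorems \ref{cap_CA}, \ref{ecf_CA} and corollary \ref{ecf_closed}, not convexity.
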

\begin{proof}
\begin{enumerate}[a)]
\item $\binecf(T)$ is convex: Let $Y_F$ and $Y_G$ be binary random fields on $T$corresponding to $F,G\in \binecf(T)$ and  $p_F:= \EE (Y_F)_t > 0$, $p_G:= \EE (Y_G)_t > 0$. Choose $U \sim \Bin(1,\alpha)$, independent of $Y_F$ and $Y_G$.
  Then $U Y_F  + (1- U) Y_G$ is binary and its capacity $C_\alpha: \finite(T) \rightarrow \RR$ is given in terms of the capacities $C_F$ and $C_G$ by
  \begin{align*}
    C_\alpha = \frac{\alpha p_F}{\alpha p_F + (1-\alpha)p_G} \, C_F + \frac{(1-\alpha)p_G}{\alpha p_F + (1-\alpha)p_G} \, C_G.
  \end{align*}
  Now, for every $\beta \in (0,1)$ there is a unique $\alpha\in(0,1)$ such that $\frac{\alpha p_f}{\alpha p_f + (1-\alpha)p_g}=\beta$.
\item $\maxecf(T)$ is convex: 
  Consider the the max-combination $X = \max(\alpha X_1, (1-\alpha) X_2)$
  of two independent random fields $X_1$ and $X_2$ on $T$ with standard Fr{\'e}chet marginals and extremal coefficient functions $\theta_1$ and $\theta_2$, respectively. Then the extremal coefficient function $\theta$ of $X$ is $\alpha \theta_1 + (1-\alpha)\theta_2$. \qedhere
 \end{enumerate} 
\end{proof}

\begin{remark}
Alternatively, the convexity $\maxecf(T)=\limecf(T)$ follows directly from theorem \ref{ecf_CA} or we could argue that it must be convex as the closure of the convex set $\binecf(T)$. But the argument given here is a bit more constructive.
\end{remark}

\subsection{Extremal correlation functions}

Similarly to the previous section, we define  the following sets of functions on $T \times T$:
\begin{align*}
  \maxcorr(T) &:= \left\{ (s,t) \mapsto \chi(s,t)= \lim_{x \uparrow \infty} \PP(X_s \geq x \mid X_t \geq x) 
  \pmid \begin{array}{l} \text{$X$ a {\em max-stable} r.f. on $T$} \\ \text{with std. Fr{\'e}chet marginals} \end{array} \right\}\\ \\ 
  \limcorr(T) &:= \left\{ (s,t) \mapsto \lim_{x \uparrow \infty} \PP(Z_s \geq x \mid Z_t \geq x) 
  \pmid \begin{array}{l} \text{$Z$ a r.f. on $T$ with same} \\ \text{one-dimensional marginals}  \\ \text{with upper endpoint $\infty$,} \\ \text{{\em such that the limits exist}} \end{array} \right\}\\ \\ 
  \bincorr(T) &:= \left\{ (s,t) \mapsto \PP(Y_s = 1 \mid Y_t = 1 ) 
  \pmid \begin{array}{l} \text{$Y$ a r.f. on $T$ with same} \\ \text{one-dimensional marginals}  \\ \text{{\em taking values in $\{0,1\}$,}} \\ \text{such that $\EE Y_t\neq 0$} \end{array} \right\}
  \end{align*}

All of these are sets consist of symmetric, positive definite functions on $T \times T$ with values in $[0,1]$, which take the value $1$ on the diagonal. 

\begin{proposition}\label{corr_inclusions}
The following inclusions hold:
\begin{enumerate}[a)]
\item $\bincorr(T) \subset \maxcorr(T) \subset  \limcorr(T)$
\item $\limcorr(T)=\cl(\bincorr(T))=\cl(\maxcorr(T))$
\end{enumerate}
where the closure is meant with respect to pointwise convergence.\\
If $T$ is a separable metric space, we have additionally
\begin{enumerate}[c)]
\item $\{ \chi \in \maxcorr(T) : \chi \text{ continuous} \} = \{ \chi \in \limcorr(T) : \chi \text{ continuous} \}$.
\end{enumerate}
\end{proposition}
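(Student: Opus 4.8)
The plan is to exploit the affine bijection $\chi(s,t) = 2 - \theta(\{s,t\})$ from (\ref{sumistwo}), which converts each statement about correlation functions into one about the bivariate restriction of the associated extremal coefficient function. Writing $\Phi(\theta)(s,t) := 2 - \theta(\{s,t\})$, a short inclusion--exclusion gives $\maxcorr(T) = \Phi(\maxecf(T))$ and $\bincorr(T) = \Phi(\binecf(T))$, with $\Phi$ continuous for pointwise convergence. The decisive difference from Proposition \ref{ecf_inclusions} is that membership in $\limcorr(T)$ only requires the \emph{bivariate} limits to exist, not the limits over all finite $M$; hence $\limcorr(T)$ can be strictly larger than $\maxcorr(T)$, which is exactly why the closure appears in part b). My target is the chain
\[ \bincorr(T) \subseteq \maxcorr(T) \subseteq \limcorr(T) \subseteq \cl(\bincorr(T)) \]
together with the reverse inclusion $\cl(\bincorr(T)) \subseteq \limcorr(T)$.

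For part a), the inclusion $\bincorr(T) \subseteq \maxcorr(T)$ is the image under $\Phi$ of $\binecf(T) \subseteq \maxecf(T)$ (inclusion (\ref{bininmax})), while $\maxcorr(T) \subseteq \limcorr(T)$ holds because a max-stable field is itself an admissible field in the definition of $\limcorr(T)$. For the inclusion $\limcorr(T) \subseteq \cl(\bincorr(T))$ I would threshold: given $g \in \limcorr(T)$ realised by a field $Z$, the binary fields $Y^{(x)}_t := \Eins_{Z_t \geq x}$ have common marginal $\PP(Z_t \geq x) > 0$ and conditional probabilities $\PP(Y^{(x)}_s = 1 \mid Y^{(x)}_t = 1) = \PP(Z_s \geq x \mid Z_t \geq x)$ lying in $\bincorr(T)$; letting $x \uparrow \infty$ exhibits $g$ as a pointwise limit of elements of $\bincorr(T)$. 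Taking closures in the chain then yields $\cl(\bincorr(T)) = \cl(\maxcorr(T))$ automatically.

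The main obstacle is the reverse inclusion $\cl(\bincorr(T)) \subseteq \limcorr(T)$, equivalently that $\limcorr(T)$ is closed under pointwise convergence, and here I would give a direct \emph{stacking} construction. Suppose $h_n \in \bincorr(T)$ are realised by binary fields $Y^{(n)}$ with marginals $p_n$ and converge pointwise to $h$. First thin each field by an independent $\mathrm{Bernoulli}(\beta_n)$ factor applied uniformly over $t$: this leaves the conditional correlation $h_n$ unchanged while rescaling the marginal to $\beta_n p_n$, so I may assume the $p_n$ decay super-exponentially, $\sum_{m>n} p_m = o(p_n)$, and in particular $\sum_n p_n < \infty$. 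Taking the $Y^{(n)}$ independent and fixing scales $0 < c_1 < c_2 < \cdots \uparrow \infty$, define
\[ Z_t := \sup\{\, c_n : Y^{(n)}_t = 1 \,\}, \]
which is finite almost surely by Borel--Cantelli and has upper endpoint $\infty$ with the same one-dimensional law at every $t$. For $x \in (c_{n-1}, c_n]$ the event $\{Z_t \geq x\}$ equals $\{\exists\, m \geq n : Y^{(m)}_t = 1\}$, so the conditional probability is constant on each such interval; a two-sided estimate (the dominant contribution being the $m=n$ term $h_n p_n$ against $p_n$, the tail contributing only $O(\sum_{m>n} p_m) = o(p_n)$) shows $\PP(Z_s \geq x \mid Z_t \geq x) \to h(s,t)$ as $x \uparrow \infty$. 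Hence $h \in \limcorr(T)$, which gives $\cl(\bincorr(T)) \subseteq \limcorr(T)$ and completes part b).

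Part c) is then immediate from Theorem \ref{corr_cvgce_to_cts}. If $\chi \in \limcorr(T)$ is continuous, part b) provides max-stable correlation functions $\chi_n \in \maxcorr(T)$ converging pointwise to $\chi$; since $T$ is separable metric and the limit $\chi$ is continuous, the theorem yields that $\chi$ is the extremal correlation function of a (stochastically continuous) max-stable field, i.e. $\chi \in \maxcorr(T)$, and the reverse inclusion is trivial from $\maxcorr(T) \subseteq \limcorr(T)$. I expect the stacking construction in part b) to be the only delicate point; the bookkeeping with closures and the continuity appeal in part c) are routine given the earlier results.
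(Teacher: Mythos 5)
Your proposal is correct, and on parts a), c) and the easy half of b) it coincides with the paper's own argument: the identification $\chi(s,t)=2-\theta(\{s,t\})$ reducing $\bincorr(T)\subset\maxcorr(T)$ to the capacity-to-extremal-coefficient assignment of proposition \ref{ecf_inclusions}, the thresholding $Y_t=\Eins_{Z_t\geq x}$ for $\limcorr(T)\subset\cl(\bincorr(T))$, and the appeal to theorem \ref{corr_cvgce_to_cts} for c) are all exactly what the paper does. Where you genuinely diverge is the delicate inclusion $\cl(\bincorr(T))\subset\limcorr(T)$. The paper keeps the given binary fields $Y_n$ and their marginals $b_n$ untouched and forms the single field $X=N\cdot Y_N$ with an independent random index $N$, $\PP(N=n)=2^{-2^n}$ for $n\geq 2$; because the $b_n$ are uncontrolled, it must pass to a subsequence and split into two cases ($b_n$ bounded below versus $b_n$ decreasing to $0$) to show $\sum_{n>m}(b_n/b_m)(p_n/p_m)\to 0$. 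Your stacking construction instead pre-normalizes the marginals: thinning each field by one global Bernoulli factor leaves the conditional correlation $h_n$ intact while shrinking the marginal to $q_n=\beta_n p_n$, after which the independent superposition $Z_t=\sup\{c_n : Y^{(n)}_t=1\}$ succeeds with a short two-sided bound (numerator between $a_n(s,t)$ and $a_n(s,t)+2\sum_{m>n}q_m$, denominator between $q_n$ and $q_n+\sum_{m>n}q_m$), and your observation that the conditional probability is constant on each interval $(c_{n-1},c_n]$ makes existence of the limit immediate; the checks of identical one-dimensional marginals, upper endpoint $\infty$, and a.s. finiteness via Borel--Cantelli all go through. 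The thinning trick thus buys you freedom from the paper's two-case analysis and subsequence extraction, which is arguably cleaner. The one detail you assert without proof is that thinning achieves $\sum_{m>n}q_m=o(q_n)$: the naive choice $q_n=\min(p_n,2^{-2^n})$ can fail when the original $p_n$ oscillate (a huge $p_{n+1}$ capped only at $2^{-2^{n+1}}$ can dominate a tiny $q_n$), so the thinned marginals must be chosen recursively, e.g. $q_n=\min\left(p_n,\,4^{-n}q_{n-1}\right)$, which gives $\sum_{m>n}q_m\leq 2\cdot 4^{-(n+1)}q_n$; this is a one-line repair, not a gap in the idea.
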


\begin{remark}
In particular if $T$ is countable, we have $\maxcorr(T)=\limcorr(T)$ (cf. corollary \ref{countable_corr_closed}). It would be interesting to know whether also in the general situation $\maxcorr(T)=\limcorr(T)$ is correct. 
\end{remark}

\begin{proof}[Proof of proposition \ref{corr_inclusions}]
  \begin{enumerate}[a)]
  \item \label{bin_in_coef}
     Let $(Y_t)_{t \in T}$ be a binary random field with  same one-dimensional marginals and $\EE Y_t \neq 0$. Then $\PP(Y_s = 1 \mid Y_t = 1 ) = 2-C(\{s,t\})/C(\{t\})$ in terms of the capacity functional $C$ of $Y$, just as $\chi(s,t)=2-\theta(\{s,t\})$ for the quantities that we defined for max-stable random fields (cf. (\ref{sumistwo})). Therefore, the assignment $C \mapsto \theta$ from capacity functionals of binary random fields to extremal correlation functions of max-stable random fields, which is given by $\theta(M):=C(M)/C(\{t\})$ proves $\bincorr(T) \subset \maxcorr(T)$ (cf. the proof of $\binecf(T)\subset \maxecf(T)$ in proposition \ref{ecf_inclusions}). The inclusion $\maxcorr(T) \subset  \limcorr(T)$ follows directly from their definitions.

\item Because of \ref{bin_in_coef}) it suffices to show $\limcorr(T)=\cl(\bincorr(T))$.
  By definition of $\limcorr(T)$ and $\bincorr(T)$ and considering the random fields $Y_t = \Eins_{Z_t \geq x}$ indexed by $x>0$, we have $\limcorr(T) \subset \cl(\bincorr(T))$. To show the reverse inclusion let $f_n \to f$ be a pointwise converging sequence with $f_n \in \bincorr(T)$ and $Y_n$ the corresponding binary random fields, which are chosen to be independent. For further computations we abbreviate $a_n(s,t):=\PP((Y_n)_s = 1, (Y_n)_t=1)$ and $b_n:=\PP((Y_n)_t=1) = \EE (Y_n)_t$ (which is independent from $t$). Note that by choice of $Y_n$ we have $0 \leq a_n(s,t) \leq b_n \leq 1$ and $0<b_n$ and that $\frac{a_n(s,t)}{b_n} \rightarrow f(s,t)$ as $n \to \infty$. We may assume that one of the following two cases applies:
\begin{description}
\item{\bf{First case:}} There exists a lower bound $b > 0$ such that $b_n>b$ for all $n \in \NN$.
\item{\bf{Second case:}} The sequence $b_n$ converves monotonously to $0$ as $n \to \infty$.
\end{description}
(Otherwise there exists a subsequence $(b_{n_k}) \rightarrow 0$ that converges monotonously to $0$ as $k \to \infty$ and we work with the sequence $(f_{n_k})_k$ instead of $(f_n)_n$ which still converges to $f$ pointwise.)

Set $X = N \cdot Y_N$, where $N$ is an independent random variable taking values in $\NN$ with $p_n:=\PP(N =n) = 2^{-2^n}$ for $n\ge 2$ and $p_1:=\PP(N=1) = 1 - \sum_{i=2}^\infty  2^{-2^i}$. Then the sequence $(p_n)_{n\in\NN}$ satisfies the property that $\sum_{n=m+1}^{\infty}\frac{p_n}{p_m} \leq 2^{-2^m} \rightarrow 0$ as $m \to \infty$. Now,  
\begin{align*}
\PP(X_s \geq x \mid X_t \geq x) 
=\frac{\sum_{n=\lceil x \rceil}^{\infty} a_n(s,t) \cdot p_n}{\sum_{n=\lceil x \rceil}^{\infty} b_n \cdot p_n}
=\frac{\frac{a_m(s,t)}{b_m}+\sum_{n=m+1}^{\infty} \frac{a_n(s,t)}{b_m} \cdot \frac{p_n}{p_m}}{1 +\sum_{n=m+1}^{\infty} \frac{b_n}{b_m} \cdot \frac{p_n}{p_m}}
\end{align*}
for $m=\lceil x \rceil$. In order to prove $\lim_{x \uparrow \infty} \PP(X_s \geq x \mid X_t \geq x)  = f(s,t)$, it suffices to show that $\lim_{m\to \infty} \sum_{n=m+1}^{\infty} \frac{b_n}{b_m} \cdot \frac{p_n}{p_m} = 0$, since $0\leq a_n(s,t)\leq b_n$. Therefore, let us consider the two cases: 
\begin{description}
\item{\bf{First case:}} Then $\sum_{n=m+1}^{\infty} \frac{b_n}{b_m} \cdot \frac{p_n}{p_m} \leq \frac{1}{b} \sum_{n=m+1}^{\infty} b_n \cdot \frac{p_n}{p_m}\leq \frac{1}{b} \sum_{n=m+1}^{\infty}  \cdot \frac{p_n}{p_m} \rightarrow  0$ by choice of $(p_n)_{n \in \NN}$.
\item{\bf{Second case:}} Then $\sum_{n=m+1}^{\infty} \frac{b_n}{b_m} \cdot \frac{p_n}{p_m} \leq \sum_{n=m+1}^{\infty} 1 \cdot \frac{p_n}{p_m} \rightarrow  0$ by choice of $(p_n)_{n \in \NN}$.
\end{description}
\item This is a direct consequence of theorem \ref{corr_cvgce_to_cts}.  \qedhere
\end{enumerate}
\end{proof}

Again, all involved sets are convex:

\begin{proposition}
All sets  $\maxcorr(T), \bincorr(T), \limcorr(T)$ are convex.
\end{proposition}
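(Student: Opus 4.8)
The plan is to reduce the convexity of these three sets of bivariate functions to the convexity of the corresponding sets of extremal coefficient functions established in Proposition \ref{ecf_convexity}, exploiting the affine correspondence between $\chi$ and $\theta$ on pairs of points. Indeed, relation (\ref{sumistwo}) tells us that every $\chi \in \maxcorr(T)$ arises from its extremal coefficient function $\theta \in \maxecf(T)$ via $\chi(s,t) = 2 - \theta(\{s,t\})$, and the analogous identity $\PP(Y_s = 1 \mid Y_t = 1) = 2 - C(\{s,t\})/C(\{t\})$ (already used in the proof of Proposition \ref{corr_inclusions}) shows that every element of $\bincorr(T)$ is $2 - \theta(\{s,t\})$ for the normalized capacity $\theta \in \binecf(T)$.

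First I would treat $\bincorr(T)$ and $\maxcorr(T)$ together. The map sending an extremal coefficient function $\theta$ to the bivariate function $(s,t) \mapsto 2 - \theta(\{s,t\})$ is affine: for $\theta_1,\theta_2$ and $\beta \in [0,1]$ we have $2 - (\beta \theta_1 + (1-\beta)\theta_2)(\{s,t\}) = \beta(2-\theta_1(\{s,t\})) + (1-\beta)(2-\theta_2(\{s,t\}))$. Since $\binecf(T)$ and $\maxecf(T)$ are convex by Proposition \ref{ecf_convexity}, their images $\bincorr(T)$ and $\maxcorr(T)$ under this affine map are convex as well. Equivalently, one could argue directly, mirroring the constructions in the proof of Proposition \ref{ecf_convexity}: for $\bincorr(T)$ one forms the mixture $U Y_F + (1-U) Y_G$ of two binary fields with an independent Bernoulli switch $U$, and for $\maxcorr(T)$ one takes a max-combination $\max(\alpha X_1, (1-\alpha)X_2)$ of two independent max-stable fields, reading off the effect on the conditional probabilities resp.\ on the extremal correlations.

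Finally, for $\limcorr(T)$ I would invoke Proposition \ref{corr_inclusions}b), which identifies $\limcorr(T) = \cl(\bincorr(T))$. Since the closure of a convex set is again convex, the convexity of $\bincorr(T)$ just established immediately yields the convexity of $\limcorr(T)$.

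I do not anticipate a genuine obstacle here: once the affine correspondence $\chi = 2 - \theta$ on pairs is in place, everything follows from results already proved. The only points requiring a little care are that the normalization $\theta = C/C(\{t\})$ correctly absorbs the differing marginal probabilities $\EE Y_t$ across fields, and that passing to the closure does not destroy convexity.
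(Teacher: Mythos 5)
Your proof is correct and follows essentially the same route as the paper: both reduce the convexity of $\bincorr(T)$ and $\maxcorr(T)$ to Proposition \ref{ecf_convexity} via the affine relation $\chi(s,t)=2-\theta(\{s,t\})$ (resp.\ $\PP(Y_s=1\mid Y_t=1)=2-C(\{s,t\})/C(\{t\})$ for binary fields), and both obtain the convexity of $\limcorr(T)=\cl(\bincorr(T))$ from the fact that taking closures preserves convexity. Your explicit remark that the map $\theta \mapsto 2-\theta(\{\cdot,\cdot\})$ is affine, and the optional direct constructions mirroring the mixture and max-combination arguments, only make explicit what the paper leaves implicit.
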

\begin{proof}
The convexity of $\bincorr(T)$ and $\maxcorr(T)$ follow from proposition \ref{ecf_convexity}, since we have $\PP(Y_s = 1 \mid Y_t = 1 ) = 2-\PP(\max(Y_s,Y_t)=1)/\PP(Y_t=1)$ for the binary random field $Y$ and $\chi(s,t)=2-\theta(\{s,t\})$ (cf. (\ref{sumistwo})).
The convexity of $\limcorr(T)=cl(\bincorr(T))=\cl(\maxcorr(T))$ follows from the convexity of $\bincorr(T)$ or $\maxcorr(T)$, since the closure does not affect the property of being convex.
\end{proof}

\paragraph{Further operations on the sets  $\maxcorr(T), \bincorr(T), \limcorr(T)$}

\begin{corollary}
Let $f \in \limcorr(T)$. Then $1-f$ satisfies for any Bernstein function $g$ the triangle inequality $g(1-f(s,t)) \leq g(1-f(s,r))+g(1-f(r,t))$.
\end{corollary}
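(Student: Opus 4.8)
The plan is to reduce the statement to the triangle inequality already established for max-stable random fields in Corollary \ref{eta_inequality}, and then to propagate it along pointwise limits using the identity $\limcorr(T)=\cl(\maxcorr(T))$ from Proposition \ref{corr_inclusions}. First I would dispose of the case $f \in \maxcorr(T)$: here $f=\chi$ is the extremal correlation function of some max-stable random field, so $1-f=\eta$ is exactly the function to which Corollary \ref{eta_inequality} applies. Consequently the desired inequality $g(1-f(s,t)) \leq g(1-f(s,r))+g(1-f(r,t))$ holds verbatim for every Bernstein function $g$ and all $s,t,r \in T$.

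For a general $f \in \limcorr(T)$, Proposition \ref{corr_inclusions} b) gives $f \in \cl(\maxcorr(T))$, so I would choose a sequence $\chi_n \in \maxcorr(T)$ with $\chi_n \to f$ pointwise. By the first step, for each $n$ and each fixed triple $s,t,r \in T$,
\[
g(1-\chi_n(s,t)) \leq g(1-\chi_n(s,r)) + g(1-\chi_n(r,t)).
\]
Since every $\chi_n$ and $f$ take values in $[0,1]$, all arguments appearing here lie in $[0,1] \subset [0,\infty)$, which is precisely the domain on which a Bernstein function $g$ is defined and, by the definition given above, continuous. Hence the pointwise convergence $1-\chi_n(\cdot,\cdot) \to 1-f(\cdot,\cdot)$ forces $g(1-\chi_n(s,t)) \to g(1-f(s,t))$, and likewise for the two terms on the right-hand side. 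Letting $n \to \infty$ preserves the weak inequality and yields the claim.

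The argument is short, and its only delicate point is the interchange of the limit with $g$. I expect this to be unproblematic, since the continuity of $g$ is built into the definition of a Bernstein function and the arguments remain in the compact interval $[0,1]$, so no boundary or growth issues can arise. I would also remark that no structure on $T$ is required here: the inequality is a pointwise statement for each fixed $s,t,r$, so the metric or separability assumptions used elsewhere play no role.
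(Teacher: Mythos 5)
Your proof is correct and is essentially identical to the paper's, whose entire argument is the one-line remark that the statement follows from Proposition \ref{corr_inclusions} and Corollary \ref{eta_inequality} --- precisely the reduction you carry out explicitly: approximate $f$ pointwise by elements of $\maxcorr(T)$ and pass the triangle inequality through the limit using the continuity of the Bernstein function $g$. (One cosmetic remark: the pointwise closure in Proposition \ref{corr_inclusions} is a priori a net closure, but no harm arises, since the definition of $\limcorr(T)$ itself supplies an honest sequence $\chi_n(s,t)=\PP(Z_s \geq n \mid Z_t \geq n) \in \bincorr(T) \subset \maxcorr(T)$ converging to $f$, and in any case the set of functions satisfying the inequality at a fixed triple $s,t,r$ is closed under pointwise convergence because $g$ is continuous.)
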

\begin{proof}
This is a consequence of proposition \ref{corr_inclusions} and corollary \ref{eta_inequality}.
\end{proof}

\begin{corollary}\label{products}
\begin{enumerate}[a)]
\item Let $f_1$ and $f_2$ be functions both either from $\bincorr(T)$ or $\limcorr(T)$. Then the product $f_1 \cdot f_2$ belongs again to this set of functions.
\item Let $T$ be separable metric. Let $f_1$ and $f_2$ be functions from  
\begin{align*}
\{ \chi \in \maxcorr(T) \pmid \chi \text{ continuous}\} = \{ \chi(X) \in \maxcorr(T) \pmid X \text{ stochastically continuous}\}.
\end{align*}
Then the product $f_1 \cdot f_2$ belongs again to this set of functions.
\end{enumerate}
\end{corollary}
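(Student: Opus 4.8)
The plan is to establish part a) by combining a direct product construction for binary fields with a limiting argument, and then to derive part b) by feeding part a) into proposition \ref{corr_inclusions}. First I would settle part a) for $f_1, f_2 \in \bincorr(T)$: pick \emph{independent} binary random fields $Y$ and $Y'$ with constant one-dimensional marginals realizing $f_1$ and $f_2$, and form the pointwise product field $W_t := Y_t Y'_t$. This is again a $\{0,1\}$-valued field whose one-dimensional marginal $\EE W_t = \EE Y_t \cdot \EE Y'_t$ is constant in $t$ and nonzero, so $W$ is admissible in the definition of $\bincorr(T)$. Independence of $Y$ and $Y'$ factorizes both the joint and the marginal probabilities, giving
\[
\PP(W_s = 1 \mid W_t = 1) = \PP(Y_s = 1 \mid Y_t = 1)\,\PP(Y'_s = 1 \mid Y'_t = 1) = f_1(s,t)\,f_2(s,t),
\]
so that $f_1 f_2 \in \bincorr(T)$.

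For the case $f_1, f_2 \in \limcorr(T)$ I would exploit the identity $\limcorr(T) = \cl(\bincorr(T))$ from proposition \ref{corr_inclusions}. Choosing $g_n \to f_1$ and $h_n \to f_2$ pointwise with $g_n, h_n \in \bincorr(T)$, the binary case just treated yields $g_n h_n \in \bincorr(T)$ for every $n$; since all these functions take values in $[0,1]$, pointwise convergence of the factors forces $g_n h_n \to f_1 f_2$ pointwise. Hence $f_1 f_2 \in \cl(\bincorr(T)) = \limcorr(T)$, which completes part a).

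For part b) I would use the inclusion $\maxcorr(T) \subset \limcorr(T)$. Given continuous $f_1, f_2 \in \maxcorr(T)$, part a) applied inside $\limcorr(T)$ places $f_1 f_2 \in \limcorr(T)$, and $f_1 f_2$ is continuous as a product of continuous functions. Now proposition \ref{corr_inclusions} c) identifies the continuous elements of $\limcorr(T)$ with the continuous elements of $\maxcorr(T)$, so $f_1 f_2$ is a continuous extremal correlation function of a max-stable field; the corollary linking continuity of $\chi$ to stochastic continuity of $X$ then guarantees a stochastically continuous realization, which is exactly the set described in part b).

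The one genuine obstacle is that $\maxcorr(T)$ is not known to be closed under products on its own: since it is open whether $\maxcorr(T) = \limcorr(T)$, the product $f_1 f_2$ can only be located inside the larger set $\limcorr(T)$ at first, and there is no direct way to stay within $\maxcorr(T)$. The continuity hypothesis together with separability of $T$ in part b) is precisely what lets us transport $f_1 f_2$ back from $\limcorr(T)$ into $\maxcorr(T)$ through proposition \ref{corr_inclusions} c). Everything else reduces to the elementary factorization above and the already-established closure of $\bincorr(T)$ under pointwise limits.
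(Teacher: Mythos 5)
Your proposal is correct and follows essentially the same route as the paper: products of independent binary fields for $\bincorr(T)$, closure under pointwise limits via $\limcorr(T)=\cl(\bincorr(T))$ for part a), and approximation by binary correlation functions for part b). The only cosmetic difference is that in b) you invoke proposition \ref{corr_inclusions}\,c) to transport the continuous product back into $\maxcorr(T)$, while the paper applies theorem \ref{corr_cvgce_to_cts} directly to the sequence $f_{n,1}\cdot f_{n,2}\in\bincorr(T)\subset\maxcorr(T)$ --- since c) is itself a direct consequence of that theorem, the two arguments unroll to the same thing.
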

\begin{proof}
\begin{enumerate}[a)]
\item The statement is easy to prove for $\bincorr(T)$ by taking the product of random fields of independent random fields. For $\limcorr(T)$ it then follows from proposition \ref{corr_inclusions}.
\item Since $\maxcorr(T) \subset \cl(\bincorr(T))$ (cf. proposition \ref{corr_inclusions}) we may choose sequences $f_{n,1},f_{n,2} \in \bincorr(T)$ which converge pointwise to $f_1$ and $f_2$, respectively. Now, for each $n$ the product $ f_{n,1}\cdot f_{n,2}$ is again an element of $\bincorr(T) \subset \maxcorr(T)$ and pointwise $f_{n,1}\cdot f_{n,2}$ converges to $f_1 \cdot f_2$. Now, the claim follows from theorem \ref{corr_cvgce_to_cts}. \qedhere
\end{enumerate}
\end{proof}

\begin{remark}
In particular the set $\limcorr(T)$ is closed under convex combinations, products and pointwise limits, whereas the set
\begin{align*}
\{ \chi \in \maxcorr(T) \pmid \chi \text{ continuous}\} = \{ \chi(X) \in \maxcorr(T) \pmid X \text{ stochastically continuous}\}
\end{align*}
is closed under convex combinations, products and pointwise limits as long as the limit function is continuous (cf. theorem \ref{corr_cvgce_to_cts}).
This allows for further operations built from these. We may apply convergent power series with positive coefficients: For instance, if $\chi \in \maxcorr(T)$ is continuous, then also  $1-(1-\chi)^{q} \in \maxcorr(T)$ for $q \in (0,1)$. 
Note that corollary \ref{theta_ops} is in accordance with these operations, since corollary \ref{theta_ops} implies that for any $\chi \in \maxcorr(T)$ and any Bernstein function $g$ with $g(0)=0$ and $g(1)=1$, the function $2-g(2-\chi)$ is again in $\maxcorr(T)$. However, the above is slightly more general for continuous members of $\maxcorr(T)$. For instance, we get that $1-g(1-\chi) \in \maxcorr(T)$, if $\chi \in \maxcorr(T)$ is continuous.
Finally we want to remark that, although these operations (convex combinations, products, pointwise limits) are well-known operations on the set of positive definite functions, it has not been obvious so far that they are compatible with the restriction to (continuous) extremal correlation functions of max-stable random fields. 
\end{remark}

\section{The stationary case}\label{stationary}

Most of the results stated before, are compatible with group actions on the location space $T$ and we could formulate a version which incorporates this group action. For example if we let $T=\RR^d$ and focus on {\em stationary} random fields on $\RR^d$, theorem \ref{ecf_CA} has  the following version:

\begin{theorem} 
  \begin{enumerate}[a)]
  \item
    The function $\theta: \finite(T) \rightarrow \RR$ is the extremal coefficient function of a stationary max-stable random field on $\RR^d$ with standard Fr{\'e}chet marginals if and only if the following four conditions are satisfied:
    \begin{enumerate}[(i)]
    \item  $\theta(\emptyset) = 0$
    \item  $\theta(\{t\}) = 1$ for all $t \in \RR^d$
    \item  $\theta$ is completely alternating.
    \item  $\theta(M+t)=\theta(M)$ for any $M \in \finite(\RR^d)\setminus\{\emptyset\}$ and any $t \in \RR^d$.
    \end{enumerate}
  \item  If these conditions are satisfied, the following choice of coefficients 
    \begin{align*}
      \tau^M_L :=  - \Delta_{\{t_1\}} \dots \Delta_{\{t_l\}} \theta &(M \setminus L) =  
      \sum_{I \subset L} (-1)^{|I|+1} \theta((M \setminus L) \cup I)\\
      & \quad \forall \, M \in \finite(T)\setminus\{\emptyset\},\, \emptyset \neq  L = \{t_1,\dots,t_l\} \subset M
    \end{align*}
    for model (\ref{model}) defines a stationary max-stable random field $X^*$ on $\RR^d$ with standard Fr{\'e}chet marginals which realizes $\theta$ as its own extremal coefficient function $\theta^*$.
  \end{enumerate}
\end{theorem}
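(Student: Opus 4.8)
The plan is to bootstrap everything from theorem \ref{ecf_CA}, since conditions (i)--(iii) of the stationary statement are identical to those of theorem \ref{ecf_CA}; the only genuinely new ingredients are the translation invariance (iv) and, in part b), the claim that the constructed field $X^*$ is \emph{stationary}. For necessity, a stationary max-stable random field is in particular max-stable, so theorem \ref{ecf_CA} a) already yields (i)--(iii). For (iv), I would use that stationarity means $\{X_t\}_{t \in M}$ and $\{X_{t+s}\}_{t \in M}=\{X_t\}_{t \in M+s}$ are equal in distribution for every $s \in \RR^d$ and every $M \in \finite(\RR^d)$; since $\theta(M)$ depends only on the joint law of $\{X_t\}_{t \in M}$ (cf. (\ref{theta_intro})), this immediately gives $\theta(M+s)=\theta(M)$.

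For sufficiency and part b), given a function $\theta$ satisfying (i)--(iv), I would first set (iv) aside and apply theorem \ref{ecf_CA} to (i)--(iii): this produces the max-stable random field $X^*$ of model (\ref{model}) with the displayed coefficients $\tau^M_L$, realizing $\theta$ as its extremal coefficient function. The single remaining point is then to verify that $X^*$ is stationary, i.e. that all of its finite-dimensional laws are translation invariant.

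The key observation is that (iv) propagates from $\theta$ to the coefficients as the identity $\tau^M_L=\tau^{M+s}_{L+s}$ for all $\emptyset\neq L\subset M$ and all $s\in\RR^d$. This follows by reindexing $I\mapsto I+s$ in the defining sum $\tau^M_L=\sum_{I\subset L}(-1)^{|I|+1}\theta((M\setminus L)\cup I)$ and using $(M+s)\setminus(L+s)=(M\setminus L)+s$ together with the translation invariance of $\theta$. With this in hand, stationarity is a direct reading of model (\ref{model}): reindexing $L\mapsto K+s$ (with $K\subset M$) in
\[
-\log\PP(X^*_{t+s}\leq x_t,\ t\in M)=\sum_{\emptyset\neq L\subset M+s}\tau^{M+s}_{L}\max_{u\in L}\frac{1}{x_{u-s}}
\]
turns the right-hand side into $\sum_{\emptyset\neq K\subset M}\tau^M_K\max_{k\in K}1/x_k=-\log\PP(X^*_t\leq x_t,\ t\in M)$, so the laws of $\{X^*_{t+s}\}_{t\in M}$ and $\{X^*_t\}_{t\in M}$ coincide, which is exactly stationarity.

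I expect no serious obstacle here: the whole argument rests on theorem \ref{ecf_CA}, and the only new step is the identity $\tau^M_L=\tau^{M+s}_{L+s}$, which is a one-line reindexing once (iv) is available. The main thing to be careful about is the bookkeeping of the translations in the index sets---that $(M+s)\setminus(L+s)=(M\setminus L)+s$ and that maxima are preserved under the shift $u\mapsto u-s$---since it is precisely here that condition (iv) enters in the right form.
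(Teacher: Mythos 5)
Your proposal is correct and follows exactly the route the paper intends: the paper states this stationary version without a separate proof, presenting it as a direct consequence of theorem \ref{ecf_CA}, and your argument supplies precisely the expected details --- necessity of (i)--(iii) from theorem \ref{ecf_CA}, necessity of (iv) from shift-invariance of the finite-dimensional laws, and for part b) the reindexing identity $\tau^M_L=\tau^{M+s}_{L+s}$ (a one-line consequence of (iv)), which makes the finite-dimensional distributions of model (\ref{model}) translation invariant. The bookkeeping steps you flag, $(M+s)\setminus(L+s)=(M\setminus L)+s$ and the bijection $K\mapsto K+s$ between nonempty subsets of $M$ and of $M+s$, are indeed all that needs checking, and they go through as you describe.
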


Similarly, the results for the bivariate quantities transfer, in particular theorem \ref{corr_cvgce_to_cts} and corollary \ref{products}, which leads to the respective operations on the set of extremal correlation functions of stationary max-stable random fields. 

Further, the triangle inequality
\begin{align*}
g \circ \eta(s \pm t) \leq g \circ \eta(s) + g \circ \eta(t) \qquad \text{for $s,t \in \RR^d$}
\end{align*}
of the function $\eta=1-\chi$ of a stationary max-stable random field on $\RR^d$ and any Bernstein function $g$ (cf. corollary \ref{eta_inequality}) can be found already in the literature for some specific functions $g$:
The three choices $g(x)=\log(1+x)$, $g(x)=(1+x)^\tau-1$ for $0 < \tau < 1$ and $g(x)=1-(1+x)^\tau$ for $\tau <0$ as Bernstein functions yield precisely the set of inequalities derived in \cite{cooleyetalii_06} Proposition 4. Therein a madogram approach is used instead.

This triangular inequality where $g=\id$ is also the reason, why such functions $\eta$ (and equivalently $\chi=1-\eta$) may not be differentiable at the origin unless they are constant \cite{schlathertawn_03}. This is also true if we considered instead of $\RR^d$ any other abelian connected Lie group acting by translation on itself. For more implications of the triangular inequality see \cite{markov_95}.

Finally, we want to mention that if we consider only stationary random fields on $\RR^d$ some of the inclusions in Proposition \ref{ecf_inclusions} and \ref{corr_inclusions}) will be proper:
\begin{proposition}
The following inclusions are proper inclusions:
\begin{align}
\label{chi_stat}
\left\{ \chi(Y) \in \bincorr(\RR^d)  \pmid \text{\normalfont $Y$ stationary}\right\} 
&\subsetneq \left\{ \chi(X) \in \maxcorr(\RR^d)  \pmid \text{\normalfont $X$ stationary}\right\}\\
\notag
\left\{ \theta(Y) \in \bincorr(\RR^d)  \pmid \text{\normalfont $Y$ stationary}\right\} 
&\subsetneq \left\{ \theta(X) \in \maxcorr(\RR^d)  \pmid \text{\normalfont $X$ stationary}\right\}
\end{align}
\end{proposition}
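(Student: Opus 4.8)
The plan is to prove both properness statements at once by exhibiting a single separating object: the field $X$ of \emph{independent} standard Fr\'echet variables indexed by $\RR^d$. This field is manifestly stationary and max-stable; since $\PP(\max_{t\in M}X_t\le x)=e^{-|M|/x}$ its extremal coefficient function is $\theta(M)=|M|$, and since $X_s,X_t$ are independent for $s\neq t$ its extremal correlation function is $\chi(s,t)=\Eins_{s=t}$, i.e.\ $\chi(h)=0$ for every $h\neq 0$. Thus $\theta$ lies in the right-hand set of the $\theta$-display (read as extremal \emph{coefficient} functions $\maxecf$) and $\chi$ lies in the right-hand set of the $\chi$-display, so it remains only to check that neither object can be produced by a stationary binary field.

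The inclusion ``$\subseteq$'' in both displays is just the restriction of Proposition \ref{corr_inclusions}a) (resp.\ $\binecf(T)\subset\maxecf(T)$ from Proposition \ref{ecf_inclusions}) to stationary fields, and the point I would verify is that the underlying construction preserves stationarity: given a stationary binary field $Y$ with capacity functional $C$ and marginal $p=\EE Y_t>0$, the assignment $\theta(M):=C(M)/p$ is translation-invariant, completely alternating, and satisfies $\theta(\emptyset)=0$, $\theta(\{t\})=1$, so the stationary version of Theorem \ref{ecf_CA} yields a \emph{stationary} max-stable field realizing it with the same $\chi$. Hence every stationary binary $\chi$ (resp.\ $\theta$) is also a stationary max-stable one.

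For properness I would argue by contradiction through boundedness. Every extremal coefficient function of a stationary binary field is bounded, since $\theta(M)=C(M)/p\le 1/p$ as $C\le 1$. For the $\theta$-statement this already finishes: $\theta(M)=|M|$ is unbounded, contradicting $\theta\le 1/p$; equivalently one invokes Proposition \ref{ecf_inclusions}b), which identifies binary coefficient functions with the \emph{bounded} members of $\maxecf(\RR^d)$. For the $\chi$-statement, suppose $\chi=\Eins_{h=0}$ were realized by a stationary binary field $Y$ with $p>0$. Then $\PP(Y_s=1,Y_t=1)=0$ for all $s\neq t$, hence all higher-order intersections vanish too, and inclusion--exclusion collapses to $C(M)=\PP(\exists\,t\in M:Y_t=1)=|M|\,p$, so again $\theta(M)=C(M)/p=|M|$ is unbounded --- the same contradiction. (Equivalently, Bonferroni gives $1\ge\PP(\bigcup_{i=1}^n\{Y_{t_i}=1\})\ge np$ for $n$ distinct points, which is impossible once $n>1/p$.)

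The argument is elementary, so the one point that genuinely needs care --- and the main obstacle --- is that the non-realizability estimate for the binary case must not secretly assume any regularity (joint measurability, separability) of the binary field, since the definition of $\bincorr$ imposes none. The inclusion--exclusion/Bonferroni step above is purely finite-dimensional: it uses only the finitely many events $\{Y_{t_i}=1\}$, their common probability $p>0$ coming from stationarity, and the vanishing of their pairwise intersection probabilities, so it is robust to this concern. Beyond that, the separation reduces to the single clean fact that binary coefficient functions are uniformly bounded whereas the i.i.d.\ field's is not, which is why the same witness settles both displays simultaneously.
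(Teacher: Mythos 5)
Your proof is correct, but it takes a genuinely different route from the paper's. The paper separates the two sets via \emph{compact support}: it shows that the extremal correlation function of a stationary binary field can never have compact support (the normalized covariance $f(t)-p$ must be positive definite, and a theorem of Gneiting, Sasv\'ari and Schlather on the spatial average of $\gamma(t)=1-f(t)$ then forces $1-p\geq 1$, a contradiction), while the right-hand side does contain a compactly supported $\chi$, realized by the stationary max-stable ``moving indicator'' field $X_t=\sup_{(u,s)\in\Pi}u\Eins_A(t-s)$ built from a Poisson point process, whose $\chi$ is the normalized set covariance of $A$; the $\theta$-display then follows since $\theta$ determines $\chi$ on pairs. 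You instead use the i.i.d.\ standard Fr\'echet field as a single witness: $\theta(M)=|M|$ is unbounded while every binary $\theta=C/p$ is bounded by $1/p$, and $\chi=\Eins_{s=t}$ is excluded from $\bincorr(\RR^d)$ by your Bonferroni/disjointness argument ($np\leq 1$ for $n$ distinct points, impossible for $n>1/p$) --- both steps are correct, and your check that the inclusions themselves persist under stationarity (via the stationary version of Theorem \ref{ecf_CA}) is the right one. What each approach buys: yours is entirely elementary and self-contained, requires no external Fourier-analytic theorem and no point-process construction, and in fact shows the stronger statement that $\Eins_{s=t}$ fails to lie in $\bincorr(\RR^d)$ even without stationarity (the same-marginals condition alone suffices). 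The paper's argument, on the other hand, separates with a nondegenerate, stochastically continuous witness whose $\chi$ is continuous and compactly supported, so it establishes that the inclusion remains proper even after restricting to continuous extremal correlation functions --- a refinement your degenerate, discontinuous witness cannot deliver, since $\Eins_{s=t}$ is the $\chi$ of the trivial independent field.
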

\begin{proof}
It suffices to show that the inclusion (\ref{chi_stat}) is proper. We will show that the l.h.s. of (\ref{chi_stat}) may not contain functions with compact support, whereas the r.h.s. does contain such functions: 

Suppose the l.h.s. contains a function $f:\RR^d\rightarrow [0,1]$, $f(t)=\PP(Y_t=1 \mid Y_0=1)$ with compact support for some stationary random field $Y$ on $\RR^d$ with values in $\{0,1\}$ and $p:=\EE Y_0 > 0$. Then the normalized covariance function $C(h):=\Cov(Y_t,Y_0)/p=f(t)-p$ has to be positive definite on $\RR^d$. Further, set $\gamma(t):=C(0)-C(t)=1-f(t)$, which is constantly $1$ away from some compact set around the origin because $f$ has compact support. Due to \cite{gneitingsasvarischlather_01}, theorem 6, we conclude that $1>1-p=C(0)\geq \lim_{s \to \infty}   (2s)^{-d} \int_{[-s,s]^d} \gamma(t) \D t = 1$, which leads to a contradiction.

On the other hand, the r.h.s. of (\ref{chi_stat}) contains functions of compact support. For instance, let $A\subset \RR^d$ be a compact subset of $\RR^d$ and let $\mu=\int \Eins_A(s) \D s$. Let $\Pi$ be a Poisson point process on $(0,\infty) \times \RR^d$ with intensity $\mu^{-1} u^{-2} \D u \D s$. Then the random field  $X_t = \sup_{(u,s) \in \Pi} u \Eins_A(t-s) $ on $\RR^d$ is stationary and max-stable \cite{schlather_02} and has extremal correlation function $\chi(t)=\mu^{-1} \int \Eins_A(s-t)\Eins_A(s) \D s$, which has compact support. 
\end{proof}

\noindent{\bf Acknowledgment}
We would like to thank Ilya Molchanov for inspiring discussions. Financial support for the first author by the German Research Foundation DFG through the Research Training Group 1023 is gratefully acknowledged.

\bibliographystyle{plainnat}
\bibliography{literature}

\end{document}